\newtheorem{theorem}{Theorem}
\newtheorem{definition}{Definition}
\newtheorem{proposition}{Proposition}
\newtheorem{lemma}{Lemma}
\newtheorem{rmk}{Remark}
\newenvironment{remark}{\begin{rmk}\rm}{\end{rmk}}
\renewcommand{\ne}{\not =}
\title{Infinitesimal Poincar\'{e}-Bendixson problem in dimension three}
\author{C. Alonso-Gonz\'{a}lez}
\author{F. Cano}
\author{R. Rosas}
\date{26 November 2012 }
\begin{document}
\maketitle
\section*{Abstract} We describe the sets of accumulation of secants for orbits of real analytic
vector fields in dimension three having the origin as only $\omega$-limit point.
It is a kind of infinitesimal Poincaré-Bendixson problem in dimension three.
These sets have structure of cyclic graph when the singularities are isolated under
one blow-up. In the case of hyperbolic reduction of singularities with conditions
of Morse-Smale type, we prove that the accumulation set is at most a single poly-cycle
isomorphic to ${\mathbb S}^1$. 

\section{Introduction}
Let
$$
\xi=a(x,y,z)\frac{\partial}{\partial x}+
b(x,y,z)\frac{\partial}{\partial y}+
c(x,y,z)\frac{\partial}{\partial z}
$$
be a real analytic vector field defined in a neighborhood of the origin of ${\mathbb R}^3$ and assume that the origin is an equilibrium point of $\xi$. Consider an orbit $\gamma$ of $\xi$ such that the origin is the only $\omega$-limit point of $\gamma$, that is $\lim_{t\rightarrow +\infty}\gamma(t)=(0,0,0)$.
The {\em secants\/} of $\gamma$ are the vectors $\gamma(t)/\vert\vert\gamma(t)\vert\vert\in {\mathbb S}^2$.
We say that $\gamma$ has {\em tangent} at the origin if the limit of secants $$\lim_{t\mapsto +\infty}\gamma(t)/\vert\vert\gamma(t)\vert\vert$$ exists. In case we have no tangent, it is a natural question to ask for a description of the set of accumulation of secants
$$
\mbox{Sec}(\gamma)=\bigcap_{s}\overline{
\{\gamma(t)/\vert\vert\gamma(t)\vert\vert;\, t\geq s\}
}\subset {\mathbb S}^2.
$$
This problem can be considered as an infinitesimal version of the classical Poincar\'{e}-Bendixson Theorem. The question raised from the study in \cite{Can-M-S} of the oscillation properties of trajectories of analytic vector fields. In the particular case of gradient vector fields, it is known \cite{Kur-M-P} that all the orbits have a well defined tangent, that is, the set $\mbox{Sec}(\gamma)$ is a single point, but for general vector fields, the problem is widely open.

In this paper we give a result for generic absolutely isolated singularities. In this case ``up to further blow-ups" the accumulation set is at most a single cycle.
\begin{theorem}
 \label{th:absi}
 Assume that $\xi$ has a generic absolutely isolated singularity at the origin. Then, up to perform finitely many blow-ups, the set of accumulation of secants $\mbox{Sec}(\gamma)$ is of one of the following types
\begin{enumerate}
\item A single point (case of iterated tangents).
\item A periodic orbit.
\item A poly-cycle homeomorphic to ${\mathbb S}^1$.
\end{enumerate}
\end{theorem}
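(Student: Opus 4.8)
The plan is to transfer the problem, by means of a real (oriented) blow-up of the origin, into a question about the $\omega$-limit set of a trajectory that accumulates on an invariant surface, and then to invoke a Poincar\'e--Bendixson theorem in the resulting two-dimensional situation.

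\textbf{Step 1: Blow-up and reduction of singularities.} First I would perform the oriented blow-up $\pi_1\colon M_1\to(\mathbb{R}^3,0)$, whose exceptional divisor $E_1=\pi_1^{-1}(0)$ is a $2$-sphere and near which $M_1$ is a collar $E_1\times[0,\varepsilon)$ with radial coordinate $r$. Lifting $\xi$ and dividing by the appropriate power of $r$ produces an analytic field $\tilde\xi_1$ on $M_1$ tangent to $E_1$, and $\gamma$ lifts, after reparametrization, to a trajectory $\tilde\gamma_1$ of $\tilde\xi_1$ with $r\circ\tilde\gamma_1(t)\to 0$; since blowing up does not change the orbit as a set, one has $\mathrm{Sec}(\gamma)=\omega(\tilde\gamma_1)\subset E_1$ after the natural identification $E_1\cong\mathbb{S}^2$. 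Because the singularity is absolutely isolated, the singular set of $\tilde\xi_1$ on $E_1$ is discrete, hence finite; when the extra genericity (Morse--Smale type) hypotheses are not yet realized one continues blowing up, at points and, where necessary, along invariant curves, and the absolutely-isolated-and-generic hypothesis guarantees that this reduction terminates after finitely many steps, yielding $\pi\colon M\to(\mathbb{R}^3,0)$ whose exceptional divisor $\mathcal{E}$ is a normal-crossings union of invariant surfaces carrying only hyperbolic (``elementary'') singularities, with finitely many singularities and periodic orbits on each component. It suffices to determine the type of $\Omega:=\omega(\tilde\gamma)$ for the lift $\tilde\gamma$ of $\gamma$ in $M$, since the natural proper morphism $\mathcal{E}\to E_1=\mathbb{S}^2$ carries $\Omega$ onto $\mathrm{Sec}(\gamma)$ and, being a diffeomorphism away from the finitely many centers, preserves its topological type in the generic case.

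\textbf{Step 2: From a $3$-dimensional trajectory to a planar asymptotically autonomous one.} The genuine difficulty is that $\tilde\gamma$ is \emph{not} a trajectory inside the invariant surface it accumulates on, so the classical Poincar\'e--Bendixson theorem does not apply directly. To circumvent this, after discarding an initial segment one may assume (using that $\Omega$ is connected and contained in $\mathcal{E}$, the analysis being local, and that the dynamics along the corners of $\mathcal{E}$ is controlled by the reduction) that $\tilde\gamma$ stays in a collar $F\times[0,\varepsilon)$ of a single component $F$ of $\mathcal{E}$. Writing $\pi_F\colon F\times[0,\varepsilon)\to F$ for the projection and $\sigma(t):=\pi_F(\tilde\gamma(t))$, the tangency of $\tilde\xi$ to $F$ gives that $\tilde\xi|_{F\times\{r\}}$ converges in $C^1$, as $r\to0$, to $X:=\tilde\xi|_F$; since $r\circ\tilde\gamma(t)\to0$, the curve $\sigma$ is an asymptotically autonomous (asymptotic pseudo-)trajectory of $X$ on $F$, and $\omega(\sigma)=\Omega$.

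\textbf{Step 3: Poincar\'e--Bendixson on $F$ and conclusion.} Every surface occurring in the reduction (spheres, cylinders, disks, M\"obius bands) is one on which the Poincar\'e--Bendixson property holds, $X$ has finitely many singularities on $F$, and the bookkeeping above rules out torus-type components; hence the generalized Poincar\'e--Bendixson theorem for asymptotically autonomous systems on such surfaces (Markus; Thieme; Mischaikow--Smith--Thieme) applies to $\sigma$ and shows that $\Omega=\omega(\sigma)$ is either (i) a single singularity of $X$, (ii) a periodic orbit of $X$, or (iii) a connected union of singularities and connecting orbits, i.e.\ a poly-cycle. Hyperbolicity of the singularities on $F$, together with the genericity hypotheses, forces the poly-cycle in case (iii) to be an embedded circle, and connectedness of $\Omega$ yields at most one such poly-cycle; pushing down by $\pi$ we obtain the three alternatives for $\mathrm{Sec}(\gamma)$, the case of a single point being exactly the phenomenon of iterated tangents. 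I expect the main obstacles to be precisely Step 2 --- establishing rigorously that a trajectory accumulating on an invariant divisor behaves, on the limit, like a planar asymptotically autonomous trajectory --- together with the verification that ``generic absolutely isolated'' delivers exactly the data (termination of the reduction, invariance of the divisor components, finiteness and hyperbolicity of the divisor singularities, control at the corners) under which this two-dimensional Poincar\'e--Bendixson machinery is legitimate.
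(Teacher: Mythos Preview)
Your Step~1 is broadly in line with the paper: one blows up, obtains a manifold with corners whose frontier $\partial M$ is a $2$-sphere, and identifies $\mathrm{Sec}(\gamma)$ with the $\omega$-limit of the lifted orbit. The paper then deduces Theorem~\ref{th:absi} from its main Theorem~\ref{th:msth}, which is a genuinely three-dimensional Poincar\'e--Bendixson statement proved via ``Poincar\'e--Bendixson traps'' (walls plus transversal doors), a cyclic-graph structure for $\omega(\gamma)$, and a careful case analysis according to whether the edges of that graph lie in the skeleton $\partial^2M$ or not.

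The essential gap in your argument is Step~2. You assert that, after discarding an initial segment, $\tilde\gamma$ can be assumed to stay in a collar $F\times[0,\varepsilon)$ of a \emph{single} component $F$ of the divisor. This is exactly what fails in general and is the reason the paper cannot reduce to a two-dimensional (asymptotically autonomous) problem. After several blow-ups $\partial M$ is a sphere stratified into several components meeting along the skeleton $\partial^2M$, and the $\omega$-limit set typically meets several of these components: the paper's first ``trace'' case already produces a cycle passing through distinct components $D_1,\dots,D_n$, and Sections on ``accumulation along the skeleton'' treat cycles whose edges lie in $\partial^2M$ itself. The orbit $\tilde\gamma$ repeatedly transits from one collar to another near saddle corners and angle points, so no projection to a fixed surface $F$ captures its asymptotics; the Markus--Thieme machinery for asymptotically autonomous planar systems is therefore not applicable. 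The control ``along the corners'' that you invoke is precisely what the paper has to build by hand: weight transitions through three-dimensional saddles (Proposition~\ref{prop:invtransweight}) and the absence of \emph{infinitesimal} saddle connections are what make the saturation of transversal curves follow a single cycle and allow the trap construction to close up.

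A secondary gap is your Step~3 claim that hyperbolicity plus genericity already forces the poly-cycle to be a single embedded circle. Even on one component this does not follow from the asymptotically-autonomous Poincar\'e--Bendixson theorem, which only gives a connected union of equilibria and connecting orbits; ruling out branching requires the no-saddle-connection hypotheses and the trap argument (Lemma~\ref{lema:uno} and its consequences), which is where the real work of the paper lies.
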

Absolutely isolated singularities of vector fields have been introduced in \cite{Cam-C-S} for the complex case. The definition is the same one for the real case: the singularity is isolated and, after any finite sequence of blow-ups centered at singular points, we obtain isolated singular points and the exceptional divisor is invariant. In \cite{Cam-C-S} it is shown that we get a reduction of singularities of the vector field after finitely many blow-ups (the result is also true for the real case). We obtain in this way a blown-up ambient space which is a manifold with corners jointly with a line foliation on it. The generic conditions we assume are explained in Sections  \ref{sec:ms} and \ref{sec:weights} and they are a three-dimensional version of Morse-Smale conditions. Now, Theorem \ref{th:absi} is a consequence of
the main result in this paper, that we state as follows
\begin{theorem}
 \label{th:msth}
 Consider an oriented line foliation $\mathcal L$ on a three-dimensional real analytic manifold $M$ with corners and spherical frontier $\partial M$. Assume that $\mathcal L$ is of weak Morse-Smale type. For any
 parameterized orbit $\gamma$ contained in $ M\setminus\partial M$  and whose  $\omega$-limit set $\omega(\gamma)$ is contained in $\partial M$, we have that $\omega(\gamma)$ is either a single point or it is homeomorphic to ${\mathbb S}^1$.
\end{theorem}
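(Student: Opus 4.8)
The plan is to reduce the statement to a Poincar\'e--Bendixson analysis on the invariant sphere $\partial M\cong{\mathbb S}^2$, into which the interior orbit $\gamma$ enters only through the sequence of points it leaves on transversal sections. Since $M$ is compact and $\partial M$ is a closed invariant set, $\omega(\gamma)$ is a nonempty, compact, connected, $\mathcal L$-invariant subset of $\partial M$, and the restricted foliation $\mathcal L|_{\partial M}$ is a weak Morse--Smale line foliation on ${\mathbb S}^2$: it has finitely many singular points, all of elementary (hyperbolic) type after the reduction of singularities, finitely many hyperbolic periodic orbits, and only separatrix connections of the kind admitted by the weak Morse--Smale hypothesis. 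If $\omega(\gamma)$ is a single point we are in the first alternative of the statement. Otherwise, since there are finitely many singular points and $\omega(\gamma)$ is connected, $\omega(\gamma)$ must contain a regular point $p_0$, and hence the whole $\mathcal L$-orbit $O(p_0)\subset\omega(\gamma)$.

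I would then set up the transversal machinery. At a regular, non-corner point $p\in\omega(\gamma)\cap\partial M$ choose local coordinates $(x,y,z)$ with $z\ge 0$, $\partial M=\{z=0\}$ and $\mathcal L=\langle\partial_x\rangle$, and take the half-disk transversal $\Sigma=\{x=0\}$ with trace $\tau=\Sigma\cap\partial M$, a one-dimensional transversal of $\mathcal L|_{\partial M}$. Because $p\in\omega(\gamma)$ is regular, $\gamma$ meets $\Sigma$ in an infinite sequence of points $q_1,q_2,\dots$ lying in the interior $\{z>0\}$, and for $n$ large any two consecutive crossings are related by the holonomy (first-return) map $P$ of $\mathcal L$ along the leaf through $p$; this $P$ preserves $\tau$, preserves the side $\{z>0\}$, and restricts on $\tau$ to the Poincar\'e return map of $\mathcal L|_{\partial M}$. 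The heart of the argument is a Jordan-curve lemma inside $M$: the arc of $\gamma$ from $q_n$ to $q_{n+1}$, closed up by a short arc in $\Sigma$, bounds a topological disk in a thin slab along $\Sigma$, and $\gamma$ cannot cross that disk again; this forces the $q_n$ to have a unique accumulation point. Since $\omega(\gamma)\subset\partial M=\{z=0\}$ while an interior accumulation point of $(q_n)$ would be a fixed point of $P$ in $\{z>0\}$ — hence an interior periodic orbit inside $\omega(\gamma)$ — that unique limit must lie on $\tau$. Consequently $\omega(\gamma)\cap\Sigma$, and in particular $\omega(\gamma)\cap\tau$, consists of a single point.

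Applying this single-point property at $p=p_0$ and along the successive $\omega$- and $\alpha$-limits of $O(p_0)$, and using hyperbolicity of the singular points of $\mathcal L|_{\partial M}$ (from weak Morse--Smale) so that the transition maps of $\gamma$ through neighborhoods of those points are well defined, continuous and monotone, I get the classical dichotomy on ${\mathbb S}^2$. Either $\omega(\gamma)$ contains no singular point; then, since $\omega(\gamma)\cap\tau$ is a single point for every transversal $\tau$ at a regular point of $\omega(\gamma)$, the orbit $O(p_0)$ is periodic and $\omega(\gamma)=O(p_0)$ is a single hyperbolic periodic orbit, homeomorphic to ${\mathbb S}^1$. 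Or $\omega(\gamma)$ is a finite union of singular points and connecting orbits, i.e.\ a poly-cycle; and then the single-point property at every regular point, together with the admissible-connection conditions of weak Morse--Smale at the hyperbolic saddles — which prevent a saddle of the poly-cycle from being approached along two distinct incoming (resp.\ outgoing) separatrices, and in particular rule out figure-eight configurations — shows that the poly-cycle is a one-dimensional topological submanifold of ${\mathbb S}^2$, that is, a simple closed curve homeomorphic to ${\mathbb S}^1$.

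The main obstacle is precisely the passage through the two steps that have no analogue in the planar Poincar\'e--Bendixson theorem: because $\gamma$ lies one dimension off the invariant surface $\partial M$ on which it accumulates, the Jordan/monotonicity lemma has to be proved inside $M$ with its corners rather than on ${\mathbb S}^2$, and the behaviour of $\gamma$ near the singular points of $\partial M$ requires analysing Dulac-type transition maps of $\mathcal L$ at hyperbolic singularities lying on the frontier. It is here — keeping these transition maps tame, excluding any nontrivial recurrence on $\partial M$, and excluding non-simple poly-cycles — that the weak Morse--Smale hypotheses are used in full.
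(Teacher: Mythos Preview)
Your outline has the right shape, but two of its load-bearing steps fail as stated. First, the ``Jordan-curve lemma inside $M$'': the arc of $\gamma$ from $q_n$ to $q_{n+1}$ closed up by a short arc in $\Sigma$ is a closed curve in a three-manifold, and a disk it bounds ``in a thin slab along $\Sigma$'' does not separate anything---$\gamma$ can leave the slab and return on the other side. What one actually needs is a \emph{surface} separating a full neighbourhood of $\partial M$ into two pieces. The paper builds exactly this (a ``Poincar\'e--Bendixson trap''): a topological cylinder $\tilde\tau:\mathbb S^1\times[0,1]\to M$ whose base is a Jordan curve in $\partial M\cong\mathbb S^2$, consisting of a flow-invariant ``wall'' glued to a small transversal ``door''. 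Because the base separates the sphere $\partial M$ and $\gamma$ is eventually confined near $\partial M$, the cylinder does separate, and $\gamma$ can cross the door at most once. That is the correct substitute for planar monotonicity; a local disk is not.

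Second, and more seriously, your final step---that weak Morse--Smale ``prevent[s] a saddle of the poly-cycle from being approached along two distinct incoming (resp.\ outgoing) separatrices''---is not what those hypotheses say. At a saddle corner $p\in\partial^3M$ the skeleton $\partial^2M$ already gives three invariant lines through $p$, and the graph $\mathcal C(\gamma)$ may well have two skeleton edges entering $p$ and one leaving, or conversely; bi-saddle angles can have two in and two out. The weak Morse--Smale conditions forbid (i) two-dimensional saddle connections inside $\partial M\setminus\partial^2M$ and (ii) \emph{infinitesimal} saddle connections, defined via a calculus of weights that tracks how the flow-saturation of a transversal curve transits a chain of saddle corners along the skeleton. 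Proving that $\mathcal C(\gamma)$ is a single simple cycle is in fact the bulk of the argument: it requires a case split (all trace edges / mixed / all skeleton edges), and in the skeleton case one constructs a minimal cycle by a right-hand rule and then uses condition (ii) to follow a saturated surface through the corners and close up a trap over that cycle. None of this follows from hyperbolicity alone, and your proposal does not engage with the corners stratification or the weight transitions at all.
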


In the body of the paper we explain the terms and give a proof of this result. The general technique we introduce is the construction of the so-called Poincar\'{e}-Bendixson traps, that work as in the classical proof of Poincar\'{e}-Bendixson Theorem.

It is not known how can be the $\omega$-limit set $\omega(\gamma)$ without the stated conditions, even if some constructions we have seen with S. Ib\'{a}\~{n}ez suggest that it should be possible to construct examples where $\omega(\gamma)$ is essentially different than the accumulation sets described in the plane Poincar\'{e}-Bendixson theorem. Anyway, it seems that some constructions of Belotto \cite{Bel} for limits of periodic orbits in families could be adapted to generate examples of accumulation sets $\mbox{Sec}(\gamma)$ in any real analytic subset of ${\mathbb S}^2$. In \cite{Pan-Rou} we find other related results for higher dimensional versions of Poincar\'{e}-Bendixson theorem.

\section{Line foliations in manifolds with corners}
Since we are going to perform several blow-ups of ${\mathbb R}^3$, it will be useful to consider real analytic manifolds with corners in dimension three and oriented line foliations on them. Let us precise the definition we take here, to prevent possible confusion with other definitions in the literature.
A {\em real analytic manifold of dimension $n$ with corners } is a Hausdorff topological space $M$ jointly with a stratification by a flag of closed sets
$$
M\supset \partial M\supset \partial^2 M\supset\cdots\supset \partial^nM
$$
and an atlas ${\mathcal A}$ as follows. A chart $(U,\phi)$ in $\mathcal A$ is a homeomorphism $\phi:U\rightarrow V$ between an open set $U\subset M$ and an open set $V\subset {\mathbb R}_{\geq 0}^n$ such that
$$
\phi(U\cap\partial^kM)=V\cap \bigcup_{i_1<i_2<\cdots<i_k}L_{i_1i_2\cdots i_k}; \quad L_{i_1i_2\cdots i_k}=\bigcap_{\ell=1}^k(x_{i_\ell}=0).
$$
Moreover, each connected component $S^{k}_j$ of $\partial^kM\setminus \partial^{k+1}M$ satisfies that $\phi(U\cap S^{k}_j)$ is contained in only one of the linear subspaces $L_{i_1i_2\cdots i_k}$.
The compatibility condition between two charts $(U,\phi)$ and $(U',\psi)$ is expressed by saying that
$$
\psi\circ\phi^{-1}:\phi(U\cap U')\rightarrow \psi(U\cap U')
$$
extends to an analytic map between open sets in ${\mathbb R}^n$. The adherence $D^k_j$ of each $S^{k}_j$ inherits a structure of real analytic manifold with corners. We also say that each $D^k_j$ is a {\em component } of $\partial^kM$. Finally we say that $\partial M$ is the {\em frontier} of $M$.

In this paper we consider mainly three-dimensional analytic manifolds $M$ with corners and {\em spherical frontier}, that is such that $\partial M$ is homeomorphic to ${\mathbb S}^2$. In this case we say that $\partial M$ is the {\em exceptional divisor of $M$}, the points of $\partial M\setminus\partial^2M$ are the {\em plane frontier points}.  We call $\partial^2 M$ the {\em skeleton} of $M$, where the points of $\partial^2M\setminus\partial^3M$ are the {\em angle points} and
the points of $\partial^3M$ are the {\em corner points of $M$}.

We get an important family of examples of three-dimensional analytic manifolds $M$ with corners and spherical frontier
as follows. Let $M_1$ be the blow-up of ${\mathbb R}^3$ in polar coordinates at the origin
$$
{\mathbb R}^3\stackrel{\sigma}{\leftarrow} M_1={\mathbb R}_{\geq 0}\times {\mathbb S}^2
$$
where $\sigma(t,{\mathbf p})=t{\mathbf p}$. We obtain now $M$ by performing successive blow-ups with centers given by points in the exceptional divisor
$$
{\mathbb R}^3\stackrel{\sigma_1}{\leftarrow} M_1 \stackrel{\sigma_2}{\leftarrow} M_2\stackrel{\sigma_3}{\leftarrow}\cdots
\stackrel{\sigma_k}{\leftarrow}M_k=M.
$$

We define  an {\em oriented line foliation with singularities} $\mathcal L$ on $M$ to be a {\em maximal foliated atlas} ${\mathcal A}_{\mathcal L}$. A foliated chart has the form $(U,\phi,\xi)$, where $(U,\phi)$ is a chart for $M$ and $\xi$ is an analytic vector field in an open neighborhood of $\phi(U)$ such that each of the $L_{i_1i_2\cdots i_k}$ are invariant for $\xi$.   The compatibility condition for $(U,\phi,\xi)$ and $(U',\psi,\xi')$ implies that $\xi$ and $h\xi'$
are $\psi\circ\phi^{-1}$-related, where $h$ is a positive real analytic function in a neighborhood of the common domain of definition. Given $p\in M$, we denote ${\mathcal L}(p)\subset T_pM$ the tangent subspace spanned by $\xi(p)$. If $p$ is an equilibrium point we have ${\mathcal L}(p)=\{0\}$, otherwise it is a tangent line.
Note that $\mathcal L$ induces an oriented line foliation ${\mathcal L}^k_j$ in each component $D^{k}_j$ of $\partial^kM$. Finally, a (positive) {\em parameterized orbit} of $\mathcal L$ is a real analytic application
$$
\gamma:(-\infty,+\infty)\rightarrow M
$$
such that $\gamma'(t)$ is a non-null tangent vector positively proportional to  $\xi(\gamma(t))$ and, for any $\gamma(t)\in U$, the support $\vert\gamma\vert$ (the image of $\gamma$) contains the maximal integral curve of $\xi$ through $\gamma(t)$ in $U$.  Let us note that $\vert \gamma\vert$ is contained in each $D^k_j$ such that $\vert \gamma\vert\cap D^k_j\ne \emptyset$.
We say that $\gamma$ is a {\em periodic orbit} if there are $a<b$ such that $\gamma(a)=\gamma(b)$; in this case $\vert\gamma\vert=\gamma([a,b])$ and it is homeomorphic to ${\mathbb S}^1$.

Let us recall that the $\omega$-limit set $\omega(\gamma)$ of a parameterized orbit $\gamma$ is defined by
$$
\omega(\gamma)=\bigcap_t \overline{\gamma(t,+\infty)}.
$$
In the case of a periodic orbit it is obvious that $\omega(\gamma)=\vert \gamma \vert$. In  a general situation, the $\omega$-limit set may be quite complicated.

{\bf Assumption.} From now on, we take an oriented line foliation $\mathcal L$ on $M$, where $M$ is a three-dimensional real analytic manifold with spherical frontier. We consider a parameterized orbit $\gamma$ such that $\vert\gamma\vert\subset M\setminus\partial M$  and $\omega(\gamma)\subset \partial M$.
\begin{remark}It is known that $\omega(\gamma)$ is a compact and connected set which is invariant by $\mathcal L$, in the sense that it is a union of equilibrium points and orbits.\end{remark}

\section{Poincar\'{e}-Bendixson arguments}
 In this section, we adapt  the classical arguments in the proof of Poincar\'{e}-Bendixson Theorem to the three-dimensional case.

Let $[0,1]\subset {\mathbb R}$ be the closed unit interval. A {\em Poincar\'{e}-Bendixson wall} for ${\mathcal L}$ is a continuous map
$
w:[0,1]\times[0,1]\rightarrow M
$
with $w^{-1}(\partial M)=[0,1]\times\{0\}$,
such that the restriction $$w\vert_{(0,1)\times(0,1)}:(0,1)\times(0,1)\rightarrow M$$ defines a differentiable embedding into $M\setminus \partial M$ tangent to $\mathcal L$ and moreover $w$ satisfies one of the next conditions:
\begin{enumerate}
\item[I.]{\em Closed wall:} $w$ factorizes through a topological embedding $$\tilde w: {\mathbb S}^1\times [0,1]\rightarrow M.$$
\item[II.]{\em Semi-open wall:} $w$ factorizes through a topological embedding $$\tilde w: [0,1]\times [0,1]/\sim\rightarrow M,$$
    where $\sim$ identifies $(0,0)$ and $(1,0)$.
\item[III.]{\em Open wall:} $w$ is a topological embedding $w:[0,1]\times[0,1]\rightarrow M$.
\end{enumerate}
The {\em base of the wall} is the restriction $\beta$ of $w$ to $[0,1]\times\{0\}$, it is always an invariant path for $\mathcal L$ in $\partial M$.
In the cases of types I and II we obtain a simple loop $\tilde \beta:{\mathbb S}^1\rightarrow \partial M$ given by $\tilde\beta(\exp {( 2\pi i t)})=\beta(t,0)$.

A {\em transversal door} for $\mathcal L$ is a continuous map
$
\delta:[0,1]\times[0,1]\rightarrow M
$
with $\delta^{-1}(\partial M)=[0,1]\times\{0\}$,
such that  $\delta\vert_{[0,1]\times(0,1]}:[0,1]\times(0,1]\rightarrow M$ defines a differentiable embedding into $M\setminus \partial M$ transversal to $\mathcal L$ and moreover $\delta$ satisfies one of the next conditions:
\begin{enumerate}
\item[T.]{\em Triangular door:} $\delta$ factorizes through a topological embedding
$$\tilde \delta: [0,1]\times [0,1]/\approx\rightarrow M,$$
    where $\approx$ identifies $(0,0)$ and $(t,0)$ for any $0\leq t\leq 1$.
\item[S.]{\em Square door:} $\delta$ is a topological embedding $\delta: [0,1]\times [0,1]\rightarrow M$.
\end{enumerate}
\begin{definition}
Let $w$ be a wall of type II, respectively III, and $\delta$ a door of type T, respectively S. We say that a continuous map $
\tau:[0,1]\times[0,1]\rightarrow M
$ is a {\em Poincar\'{e}-Bendixson trap of type II, respectively III, with wall $w$ and door $\delta$ }if
$$
\tau (t,s)=\left\{
\begin{array}{ccc}
w(2t,s)&\mbox{ if }& 0\leq t\leq 1/2\\
\delta(2t-1,s)&\mbox{ if }& 1/2\leq t\leq 1
\end{array}
\right.
$$
and it factorizes through a topological embedding $\tilde \tau:{\mathbb S}^1\times[0,1]\rightarrow M$ in such a way that
$
\tilde\tau (\exp(2\pi i t),s)=\tau (t,s)$. A {\em Poincar\'{e}-Bendixson trap $\tau$ of type I associated to a closed wall $w$} is by definition $\tau=w$. The {\em base $B_\tau\subset \partial M$} of $\tau$ is the image $B_\tau=\tilde\tau ({\mathbb S}^1\times\{0\})$. In cases of types II and III, we denote by $D_\tau$ the image of $\delta$ and we put $D_\tau=\emptyset$ if $\tau$ has type I.
\end{definition}

Let us remark that since $\partial M$ is homeomorphic to ${\mathbb S}^2$ and $B_\tau$ defines a Jordan curve in $\partial M$, then $\partial M\setminus B_\tau$ has exactly two connected components and their common frontier is $B_\tau$.

\begin{lemma}
\label{lema:uno}
Let $
\tau
$ be a Poincar\'{e}-Bendixon trap for $\mathcal L$, then $\omega(\gamma)$ is outside one of the connected components of  $\partial M\setminus B_\tau$.
\end{lemma}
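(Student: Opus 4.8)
The plan is to transplant the classical Poincar\'e--Bendixson transversal argument, with the wall of $\tau$ playing the role of the orbit arc (a set $\gamma$ cannot cross) and the door playing the role of the transversal segment (a set $\gamma$ crosses always in the same direction). Write $A=\tilde\tau(\mathbb S^1\times[0,1])$ for the embedded annulus underlying $\tau$, so that $A\cap\partial M=B_\tau$ while the other boundary circle $C_\tau=\tilde\tau(\mathbb S^1\times\{1\})$ lies in $M\setminus\partial M$ at positive distance from $\partial M$. Decompose $A=A^w\cup A^\delta$ into its wall piece $A^w$ (the image of the $w$--part, everywhere tangent to $\mathcal L$) and its door piece $A^\delta$ (the image of the $\delta$--part, everywhere transversal to $\mathcal L$), glued along the two seam curves $s\mapsto w(1,s)=\delta(0,s)$ and $s\mapsto w(0,s)=\delta(1,s)$, which are simultaneously lateral edges of the wall square and of the door; in the type I case $A=A^w$ and there is no door and no seam. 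If $\omega(\gamma)\subseteq B_\tau$ there is nothing to prove, so we may fix $p\in\omega(\gamma)\setminus B_\tau$, lying in one of the two components of $\partial M\setminus B_\tau$, say $A_1$, and it suffices to show $\omega(\gamma)\subseteq\overline{A_1}$, i.e.\ that $\omega(\gamma)$ misses $A_2$.

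The first step is localization near $\partial M$. Since $\omega(\gamma)\subseteq\partial M$ and $\partial M\cong\mathbb S^2$ is compact, $\gamma(t)$ enters any prescribed collar neighbourhood of $\partial M$ for all large $t$; choose a collar $\bar W\cong\mathbb S^2\times[0,\varepsilon]$ thin enough that it misses $C_\tau$ (hence $A\cap\bar W$ is exactly the sub-annulus of $A$ abutting $B_\tau$, and the wall's top edge, lying on $C_\tau$, is outside $\bar W$), and chosen compatibly with $\tau$ along $B_\tau$, so that $A\cap\bar W$ is a properly embedded surface separating $\bar W$ into exactly two pieces $\bar W_1,\bar W_2$ with $\overline{\bar W_i}\cap\partial M=\overline{A_i}$. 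Passing to a still thinner collar, $\gamma(t)$ for $t\ge t_0$ stays inside $\bar W$, avoids $\partial M$ (because $|\gamma|\subseteq M\setminus\partial M$) and avoids the outer boundary $\mathbb S^2\times\{\varepsilon\}$; so from $t_0$ on, the ``side'' of $\gamma$ --- being in $\bar W_1$, in $\bar W_2$, or on $A$ --- is well defined. Since $\gamma(t_k)\to p\in A_1$ along a sequence $t_k\to+\infty$, and points of $M\setminus\partial M$ near $p$ lie in $\bar W_1$, the orbit $\gamma$ visits $\bar W_1$ at arbitrarily large times.

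The second and central step is that the side of $\gamma$ can only change in one direction. On the door piece: $A^\delta$ is transversal to $\mathcal L$, hence carries no equilibria, and being connected with $\xi$ continuous and nowhere tangent to it, $\xi$ points to the same side of $A^\delta$ at every point; therefore every transversal crossing of $A^\delta$ by $\gamma$ occurs in the same direction, say from $\bar W_1$ toward $\bar W_2$ (relabelling if needed). On the wall piece: $A^w$ is tangent to $\mathcal L$, hence a union of orbit segments, so by uniqueness of integral curves $\gamma$ cannot cross $A^w$ transversally; if $\gamma$ meets the interior of $A^w$ it runs inside this invariant surface and, within $\bar W$, can leave it only through the seam curves (its other boundary pieces lying on $\partial M$ or on $\mathbb S^2\times\{\varepsilon\}$, which $\gamma$ avoids), and there, being transversal to $A^\delta$, it emerges on the $\bar W_2$ side by continuity of the door co-orientation up to its edge; and if $\gamma$ never leaves $A^w$ then $\omega(\gamma)\subseteq\overline{A^w}\cap\partial M\subseteq B_\tau$, again done. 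Thus no mechanism carries $\gamma$ from $\bar W_2$ back to $\bar W_1$, so the side of $\gamma$ is non-decreasing for $t\ge t_0$. Since $\gamma$ returns to $\bar W_1$ at arbitrarily large times, it can never have entered $\bar W_2$ at all: $\gamma(t)\in\bar W_1\cup(A\cap\bar W)$ for all $t\ge t_0$, whence $\omega(\gamma)\subseteq\overline{\bar W_1}\cap\partial M=\overline{A_1}$ and $\omega(\gamma)\cap A_2=\emptyset$. In the type I case the argument is the same with no door: $\gamma$ cannot cross the closed wall $A$ at all within $\bar W$, so its side is constant after $t_0$.

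The step I expect to be the main obstacle is the topological bookkeeping of the localization: producing a collar of $\partial M$ compatible with the trap in which the trace $A\cap\bar W$ genuinely separates $\bar W$ into the two pieces limiting onto $A_1$ and $A_2$, together with the careful treatment of the possibility that $\gamma$ does not merely cross the wall transversally but runs along it and re-enters the picture through the wall--door seam. Once that separation and the seam analysis are in place, the monotone crossing of the door, the non-crossing of the wall, and the final confinement of $\gamma$ are a direct transcription of the planar Poincar\'e--Bendixson reasoning.
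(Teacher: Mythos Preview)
Your proof follows essentially the same strategy as the paper's: pick a neighbourhood of $\partial M$ (a collar $\bar W$ for you; an open set $U\supset\partial M$ in the paper) that the embedded annulus $A=\tilde\tau(\mathbb S^1\times[0,1])$ separates into two pieces, observe that the wall is uncrossable and the door is one-way, and conclude that $\gamma$ is eventually confined to one side. Your treatment of the wall-running case and the seam exit is in fact more explicit than the paper's, which simply asserts that any contact of $\gamma$ with the annulus must occur on the door.

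There is, however, a small labeling slip that you should repair. You first fix $p\in\omega(\gamma)\cap A_1$ and undertake to show $\omega(\gamma)\cap A_2=\emptyset$; then later you write ``say from $\bar W_1$ toward $\bar W_2$ (relabelling if needed)''. You may not relabel at this point, since $A_1$ has already been pinned down by $p$. The fix is to handle the two possible door orientations separately: if the door crosses $\bar W_1\to\bar W_2$, your argument (``$\gamma$ returns to $\bar W_1$ at arbitrarily large times, so can never have entered $\bar W_2$'') works verbatim; if instead the door crosses $\bar W_2\to\bar W_1$, then once $\gamma$ reaches $\bar W_1$ (which it does, since $p\in A_1$) it can never leave, and the conclusion is immediate. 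The paper sidesteps this by anchoring the argument not at a point of $\omega(\gamma)$ but at $\gamma(t_0)$: starting from whichever side $\gamma(t_0)$ lies on, it argues that $\gamma$ either stays there forever or crosses the door exactly once and then stays on the other side, without ever needing to name the door direction.
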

\begin{proof} We can find a connected open set $U\subset M$ with $\partial M\subset U$ and such that $$U\setminus \tilde\tau( {\mathbb S}^1\times [0,1])$$ is the union of two connected components $U_1$ and $U_2$ satisfying the following properties
\begin{enumerate}
\item $\bar U_1\setminus U_1=\bar U_2\setminus U_2\subset \tilde\tau( {\mathbb S}^1\times [0,1])$, where
$\bar U_i$ is the topological closure of $U_i$ in $U$, for $i=1,2$.
\item $U_1\cap\partial M$ and $U_2\cap\partial M$  are the two connected components of $\partial M\setminus B_\tau$.
\end{enumerate}
Since $\omega(\gamma)\subset \partial M$, there is a time $t_0$ such that $\gamma(t)\in U$ for $t\geq t_0$. Assume that $\gamma(t_0)\in U_1$. If $\gamma(t)\in U_1$ for all $t\geq t_0$, we are done, since then $\omega(\gamma)$ is contained in
$$
\overline{U_1\cap\partial M}=( U_1\cap\partial M)\cup B_\tau
$$
and it has empty intersection with $U_2\cap \partial M$. If we exit $U_1$, it is not possible to return, thus we have two cases:
there is $t_1>t_0$ such that $\gamma(t_1)\in U_2$ or $\gamma(t)$ stands in $\tilde\tau({\mathbb S}^1\times [0,1])$ for $t\geq t_3>t_1$.
If there is $t_1>t_0$ such that $\gamma(t_1)\in U_2$, then there is a time $t_2$, $t_0<t_2<t_1$ such that $\gamma(t_2)\in \tau({\mathbb S}^1\times [0,1])$. Then necessarily $\gamma(t_2)$ belongs to the door $D_\tau$  and we have
$$
\gamma(t)\in U_1, \mbox{ for } t_0\leq t<t_2;\quad \gamma(t)\in U_2, \mbox{ for } t_2<t
$$
Note that we can ``cross'' the door only in one sense and thus, once we are in $U_2$, coming from $U_1$, it is not possible to exit $U_2$. We obtain that $\omega(\gamma)\subset \overline{U_2\cap\partial M}$
and it has empty intersection with  $U_1\cap \partial M$. Now, if $\gamma(t)$ stands in $\tilde\tau({\mathbb S}^1\times [0,1])$, we have that  $\omega(\gamma)\subset B_\tau$.
 \end{proof}

 Let us consider a component $D$ of $\partial M$. A {\em snail in $D$} is a topological embedding $
 {\zeta}:{\mathbb S}^1\rightarrow D
 $
 such that
  \begin{enumerate}
        \item The curve $\varsigma:(0,1)\rightarrow D$, defined by $\varsigma(t)={\zeta}(\exp \pi t)$, is the restriction of a parameterized orbit of $\mathcal L$ contained in $D$.
       \item The curve $d:(0,1)\rightarrow D$, defined by $d(t)={\zeta}(\exp \pi (t+1))$, is a differentiable curve transversal to $\mathcal L$.
  \end{enumerate}
  The image $\vert\zeta\vert$ of $\zeta$ is the {\em support} of the snail. We also denote $\zeta=(\varsigma,d)$.

 \begin{remark}  If ${\zeta}=(\varsigma,d)$ is a snail, the orbit $\varsigma$ has not the ``Rolle property'' introduced in Khovanskii's works \cite{Kho}.
 \end{remark}
 \begin{lemma}
 Let $\sigma$ be a periodic orbit with support $\vert\sigma\vert\subset \partial M$, there is a Poincar\'{e}-Bendixson trap $\tau$ of type I or II whose base is $\vert\sigma\vert$.
 Let ${\zeta}=(\varsigma,d)$ be a snail, there is a Poincar\'{e}-Bendixson trap $\tau$ of type III whose base is $\vert\zeta\vert$.
 \end{lemma}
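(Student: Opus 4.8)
The plan is to realize both traps as the flow-out of a short arc transverse to $\mathcal L$, closed up by a transversal door, working in a flow-box neighborhood of the relevant invariant curve in $\partial M$; write $\varphi_t$ for the local flow of an analytic vector field orienting $\mathcal L$. For the periodic orbit, fix $p_0\in\vert\sigma\vert$ and an analytic half-disk $\Sigma_0$ through $p_0$ transverse to $\mathcal L$, with $\Sigma_0\cap\partial M$ an arc $\ell_0\ni p_0$ and $\Sigma_0\setminus\ell_0$ lying in $M\setminus\partial M$ on one side of $\ell_0$. Since $\partial M$ is $\mathcal L$-invariant, the first-return map is a germ of analytic diffeomorphism $\mathcal R\colon(\Sigma_0,p_0)\to(\Sigma_0,p_0)$ preserving $\ell_0$ and $\Sigma_0\setminus\ell_0$. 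I will choose a short analytic arc $c\colon[0,1]\to\Sigma_0$, $c(0)=p_0$, $c((0,1])\subset\Sigma_0\setminus\ell_0$, transverse to $\ell_0$ and to $\mathcal L$, so that \emph{either} (i) $\mathcal R\circ c=c$, \emph{or} (ii) $\mathcal R(\vert c\vert)\cap\vert c\vert=\{p_0\}$; by real-analyticity of $\mathcal R$ one of these is always attainable --- (i) when the fixed-point set of $\mathcal R$ contains an analytic branch entering $M\setminus\partial M$, and (ii) otherwise, by analysing the (resolved) planar germ of $\mathcal R$ at $p_0$ to pick a suitable, possibly curved, arc with $\mathcal R(\vert c\vert)$ disjoint from $\vert c\vert$ away from $p_0$. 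Let $T(s)$ be the first-return time of $c(s)$ to $\Sigma_0$ (so $T(0)$ is the period of $\sigma$ and $\varphi_{T(s)}(c(s))=\mathcal R(c(s))$) and put $w(t,s)=\varphi_{tT(s)}(c(s))$. Then $w^{-1}(\partial M)=[0,1]\times\{0\}$ (orbits starting in $M\setminus\partial M$ stay there), $w(\cdot,0)$ parameterizes $\vert\sigma\vert$, and for $c$ short, inside a flow-box neighborhood of the embedded loop $\vert\sigma\vert$, $w\vert_{(0,1)\times(0,1)}$ is an embedding into $M\setminus\partial M$ tangent to $\mathcal L$, with boundary edges $\vert\sigma\vert$, $\vert c\vert$, $\mathcal R(\vert c\vert)$ and one orbit arc. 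In case (i) we may identify $w(0,s)\sim w(1,s)$, obtaining a closed wall with base $\vert\sigma\vert$, so $\tau=w$ is a trap of type I. In case (ii), $w$ descends to a semi-open wall whose only identification is $w(0,0)=w(1,0)=p_0$; since $\vert c\vert$ and $\mathcal R(\vert c\vert)$ meet only at $p_0$ they bound inside $\Sigma_0$ a triangular region $\delta$ transverse to $\mathcal L$, with $\delta^{-1}(\partial M)=[0,1]\times\{0\}$ collapsed to $p_0$, $\delta(0,\cdot)=\mathcal R(c)=w(1,\cdot)$ and $\delta(1,\cdot)=c=w(0,\cdot)$; concatenating $w$ and $\delta$ as in the definition yields a trap of type II with base $\vert\sigma\vert$.

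For the snail, write $\zeta=(\varsigma,d)$ with $\varsigma$ the (regular, finite-time) orbit arc running from $p:=\varsigma(0)$ to $q:=\varsigma(1)$, $p\ne q$, and $d$ the transversal arc from $q$ to $p$; here $\vert\varsigma\vert$ is an embedded arc. Fix a short arc $c$ transverse to $\mathcal L$ issuing from $p$ into $M\setminus\partial M$, let $T(s)$ be the time at which the orbit through $c(s)$ first meets a fixed transversal at $q$, and put $w(t,s)=\varphi_{tT(s)}(c(s))$; as before, for $c$ short and inside a flow-box neighborhood of $\vert\varsigma\vert$, $w$ is an embedding with $w^{-1}(\partial M)=[0,1]\times\{0\}$, $w(\cdot,0)$ parameterizing $\vert\varsigma\vert$, left edge $\vert c\vert$ and right edge $\vert c'\vert$, where $c'(s)=\varphi_{T(s)}(c(s))$. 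Since $p\ne q$, for $c$ short the edges $\vert c\vert$ and $\vert c'\vert$ are disjoint, so $w$ is an open wall with base $\vert\varsigma\vert$. For the door, thicken a collar of $d$ in $\partial M$ slightly into $M$ to obtain a square $\delta$ with $\delta(\cdot,0)=d$, $\delta(0,\cdot)=c'$, $\delta(1,\cdot)=c$: transversality of $\delta$ to $\mathcal L$ (and to $\partial M$ away from $d$) is an open condition, so a generic such extension exists, and taking $\delta$ thin the images of $w$ and $\delta$ meet exactly along $\vert c\vert\cup\vert c'\vert$. Concatenating $w$ and $\delta$ gives a trap of type III whose base is $\vert\varsigma\vert\cup\vert d\vert=\vert\zeta\vert$.

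The only non-formal point is the choice of $c$ in the periodic case: one must show the transversal arc can be taken so that $\mathcal R(\vert c\vert)$ is either equal pointwise to $\vert c\vert$ or meets it only at $p_0$, ruling out the intermediate situation where $\mathcal R(\vert c\vert)\cap\vert c\vert$ is a proper nondegenerate sub-arc (which would destroy embeddedness of the wall and block recognition of types I and II). This is exactly where real-analyticity of $\mathcal L$ is essential, and I expect it to require a reduction of the planar analytic germ of $\mathcal R$ at $p_0$. Everything else --- embeddedness of the two flow-outs, existence of the door, and that the two concatenations factor through topological embeddings $\mathbb S^1\times[0,1]\to M$ --- reduces to the flow-box theorem near $\vert\sigma\vert$ and $\vert\varsigma\vert$ together with shrinking the arc $c$ and the door.
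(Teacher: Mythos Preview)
Your approach is essentially the paper's: flow out a short analytic transversal arc along the orbit and close up with a transversal door. The paper's proof is only a few lines and uses exactly the first-return (resp.\ holonomy) construction you describe.

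Two small points of comparison. First, your ``non-formal point'' is easier than you fear, and the paper disposes of it in one sentence. You do not need to choose $c$ cleverly or resolve the planar germ of $\mathcal R$: take \emph{any} analytic arc $\Gamma$ through $p_0$ transverse to $\partial M$; its image $\Gamma'=\mathcal R(\Gamma)$ is again an analytic arc through $p_0$, and two real-analytic curve germs through the same point either coincide or meet in a set that is discrete near $p_0$. In the first case the flow-out of $\Gamma$ is an invariant annulus, which you parameterize as a closed wall (type~I); in the second, after shrinking, $\Gamma\cap\Gamma'=\{p_0\}$ and the triangular region they bound in the transversal section is the door (type~II). The ``intermediate situation'' you worry about---$\mathcal R(|c|)\cap|c|$ a proper sub-arc---is excluded outright by analyticity, no further reduction needed.

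Second, for the snail the paper chooses the transversal plane $\Delta$ to \emph{contain} $|d|$ from the start (this is possible because $d$ is transverse to $\mathcal L$); then both $\Gamma$ at $P$ and its holonomy image $\Gamma'$ at $Q$ sit in $\Delta$, and the square door is simply the region of $\Delta$ bounded by $\Gamma$, $|d|$, and $\Gamma'$. This is a bit more economical than building the door as a separate thickening of $d$ and then checking it matches $c$ and $c'$ along its lateral edges.
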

 \begin{proof} In the case of a periodic orbit, the trap $\tau$ may be constructed as follows.  We consider a plane section $\Delta$  transversal to $\vert\sigma\vert$ in a point $P$ and an analytic curve $\Gamma\subset \Delta$ passing through $P$, note that $\Delta$ is fully transversal to $\mathcal L$ if it is small enough. Then we apply the Poincar\'{e} first return map to $\Gamma$.  The image $\Gamma'$ of $\Gamma$ is an analytic curve that locally coincides with $\Gamma$ (type I, closed trap) or intersects $\Gamma$ locally at the point $P$ (type II, semi-open trap). In the case of a snail, we start by taking  a plane section $\Delta$ transversal to $\mathcal L$ and containing $\vert d \vert$, in particular $P=\zeta (\exp 2\pi)$ and $Q=\zeta (\exp \pi)$ are in $\Delta$. We take as before a curve $\Gamma\subset \Delta$ passing through $P$ and we apply the first holonomy map along $\varsigma$ to obtain $\Gamma'\subset \Delta$, with $Q\in \Gamma'$, in this way we get a trap of type III, open trap.
 \end{proof}

  \begin{remark}
   \label{rk:entornodesnails}
   Let $\sigma$ be a periodic orbit with support $\vert\sigma\vert\subset \partial M$. Let $D$ be a component of $\partial M$ such that $\vert\sigma\vert\subset D$ and consider a small curve $\Gamma_D$ transversal to $\mathcal L$ in a point of $\vert\sigma\vert\subset D$. We can consider the Poincar\'{e} first  return map  on $\Gamma_D$; each point of $\Gamma_D$ gives either a periodic orbit or a snail. In this way we see that $\vert\sigma\vert$ has a fundamental system of neighborhoods $U_i\subset \partial M$, $i\in I$, where each $U_i$ is the union of $\vert\sigma\vert$ and a family $\{S_j\}_{j\in J}$ of periodic orbits and snail supports, different from $\vert\sigma\vert$. Moreover, if we take one of such $U_i$ and we denote by $W_j$ the adherence in $\partial M$ of the connected component of $\partial M\setminus S_j$ containing $\vert\sigma\vert$, we obtain that
   $$
   \vert\sigma\vert=\bigcap_{j\in J} W_j.
   $$
 \end{remark}
\begin{lemma} Let ${\zeta}=(\varsigma,d)$ be a snail. Then $\omega(\gamma)\cap \vert\zeta\vert=\emptyset$.
\end{lemma}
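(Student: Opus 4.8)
The plan is to combine the trap produced by the preceding lemma with Lemma~\ref{lema:uno}, using in an essential way that $\varsigma$ genuinely spirals. First I would invoke the preceding lemma to obtain a Poincar\'{e}-Bendixson trap $\tau$ of type III whose base is $B_\tau=\vert\zeta\vert$. Since $\partial M$ is homeomorphic to ${\mathbb S}^2$ and $\vert\zeta\vert$ is a Jordan curve in $\partial M$, the set $\partial M\setminus\vert\zeta\vert$ has exactly two connected components $A_1,A_2$, and by Lemma~\ref{lema:uno} one of them, say $A_2$, satisfies $\omega(\gamma)\cap A_2=\emptyset$. Hence it suffices to show that no point of $\vert\zeta\vert$ lies in $\omega(\gamma)$; I argue by contradiction, assuming $p\in\omega(\gamma)\cap\vert\zeta\vert$. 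Since $\partial M$ is invariant, the $\mathcal{L}$-orbit $O$ through $p$ is contained in $\partial M$, and since $\omega(\gamma)$ is closed and invariant we get $\overline{O}\subseteq\omega(\gamma)$. I will derive a contradiction by showing that $\overline{O}$ meets \emph{both} components $A_1$ and $A_2$, which is incompatible with $\omega(\gamma)\cap A_2=\emptyset$.

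Write $P=\zeta(1)$ and $Q=\zeta(-1)$, so that $\vert\zeta\vert=\vert\varsigma\vert\cup\vert d\vert\cup\{P,Q\}$. If $p$ lies in the interior of $\vert d\vert$, the claim is immediate: $\vert d\vert$ is transversal to $\mathcal{L}$ at $p$, hence $O$ crosses $\vert d\vert$ transversally at $p$, and since $\vert d\vert$ locally separates a neighbourhood of $p$ in $\partial M$ into a piece of $A_1$ and a piece of $A_2$, the orbit $O$ has points in each of them.

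If instead $p\in\vert\varsigma\vert$ or $p\in\{P,Q\}$, then $O$ is the parameterized orbit of which $\varsigma$ is (a reparameterization of) a sub-arc. Working in a local chart of $\partial M$ around the portion of transversal containing $\vert d\vert$, the successive intersections of $O$ with this transversal form a monotone sequence, because $\varsigma$ does not have Rolle's property --- this is precisely what makes $\zeta$ a snail. Consequently the turn of $O$ immediately following the endpoint $Q$ re-enters $\partial M\setminus\vert\zeta\vert$ through the component towards which $\varsigma$ winds, whereas the turn of $O$ immediately preceding the endpoint $P$ re-enters it through the \emph{other} component; thus $\overline{O}$ again meets both $A_1$ and $A_2$. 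In every case $\overline{O}\subseteq\omega(\gamma)$ meets both connected components of $\partial M\setminus\vert\zeta\vert$, contradicting Lemma~\ref{lema:uno}, and therefore $\omega(\gamma)\cap\vert\zeta\vert=\emptyset$.

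I expect the delicate point to be the case $p\in\vert\varsigma\vert$: one must verify that the two ends of the spiral arc $\varsigma$ open into the two \emph{different} components of $\partial M\setminus\vert\zeta\vert$ --- equivalently, that the continuation of $O$ cannot fold back onto $\vert\zeta\vert$ nor re-enter on the same side --- which is the geometric heart of the word ``snail'' and requires a careful local picture near $P$ and near $Q$; the verification that $O$ is not entirely contained in $\vert\zeta\vert$ is part of this but is easy, since $\vert\zeta\vert$ contains only one loop of the spiral while $O$ has infinitely many. The transversal and corner cases are comparatively routine.
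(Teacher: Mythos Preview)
Your proof is correct and follows essentially the same route as the paper's: build a type~III trap with base $\vert\zeta\vert$, invoke Lemma~\ref{lema:uno}, and reach a contradiction by showing that the $\mathcal L$-orbit through any point of $\vert\zeta\vert$ meets both components of $\partial M\setminus\vert\zeta\vert$. The paper handles the case $p\in\vert\varsigma\vert$ more directly than you do: it simply observes that $\varsigma(-\epsilon)$ and $\varsigma(1+\epsilon)$ lie in different components---a purely local fact at the two corner points, coming from the transversality of the flow to $d$---so there is no need to appeal to monotone return sequences or to any global spiraling picture.
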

\begin{proof} Take a Poincar\'{e}-Bendixson trap $\tau$ whose base is $B_\tau=\vert\zeta\vert$. By Lemma \ref{lema:uno}, we know that $\omega(\gamma)$ is outside one of the connected components of $\partial M\setminus B_{\tau}$. But this is only possible if $\omega(\gamma)\cap \vert\zeta\vert=\emptyset$. Otherwise, take a point $p\in \omega(\gamma)\cap \vert\zeta\vert$. Assume first that $p={\zeta}(\exp 2\pi t)$, for $0\leq t\leq 1/2$ and hence $p=\varsigma(2t)$.
Then $\vert\varsigma\vert$ is contained in $\omega(\gamma)$. Taking $0<\epsilon <<1$, we have that $\varsigma(-\epsilon)$ and $\varsigma(1+\epsilon)$ are in  two distinct connected components of $\partial M\setminus B_{\tau}$, contradiction. If
$p={\zeta}(\exp 2\pi t)$, for $1/2<t<1$,  then $p=d(2t-1)$. Consider a parameterized orbit $\theta$ of $\mathcal L$ passing through $p$ and such that $\theta (0)=p$. In view of the transversal property of $d$ we have that $\theta(-\epsilon)$ and $\theta(+\epsilon)$ are in two distinct connected components of  $\partial M\setminus B_{\tau}$, getting a contradiction as above.
\end{proof}
\begin{proposition}
\label{prop:periodicorbit}
If there is a periodic orbit $\sigma$  contained in $\omega(\gamma)$, then $\omega(\gamma)=\vert\sigma\vert$.
\end{proposition}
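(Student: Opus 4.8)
The plan is to reduce everything to Lemma~\ref{lema:uno}, applied to the traps furnished by the lemma that turns a periodic orbit (resp. a snail) on $\partial M$ into the base of a Poincar\'{e}-Bendixson trap of type I or II (resp. III), and to use Remark~\ref{rk:entornodesnails} to control a whole neighbourhood of $|\sigma|$ inside $\partial M$.

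First I would observe that, by the standing assumption $\omega(\gamma)\subset\partial M$ together with the hypothesis $|\sigma|\subset\omega(\gamma)$, the support $|\sigma|$ is a periodic orbit whose support is homeomorphic to ${\mathbb S}^1$ and lies in $\partial M$; so we are exactly in the situation covered by Remark~\ref{rk:entornodesnails}. One of the two inclusions, namely $|\sigma|\subset\omega(\gamma)$, is already part of the hypothesis, so the entire task is to prove the reverse inclusion $\omega(\gamma)\subset|\sigma|$.

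Then I would fix one neighbourhood $U_i$ from the fundamental system provided by Remark~\ref{rk:entornodesnails}, with its family $\{S_j\}_{j\in J}$ of periodic orbits and snail supports (each different from $|\sigma|$ and disjoint from it, as is implicit in the statement of that remark) and the associated closed regions $W_j\subset\partial M$, which satisfy $|\sigma|=\bigcap_{j\in J}W_j$. For each $j\in J$ the quoted lemma gives a Poincar\'{e}-Bendixson trap $\tau_j$ with base $B_{\tau_j}=S_j$: type I or II when $S_j$ is a periodic orbit, type III when $S_j$ is a snail support. Applying Lemma~\ref{lema:uno} to $\tau_j$, the set $\omega(\gamma)$ misses one of the two connected components of $\partial M\setminus S_j$. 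Writing $\partial M\setminus S_j=A_j\sqcup B_j$ with $A_j$ the component containing $|\sigma|$, so that $W_j=A_j\cup S_j=\partial M\setminus B_j$, the component that $\omega(\gamma)$ misses cannot be $A_j$, since $\emptyset\ne|\sigma|\subset\omega(\gamma)\cap A_j$. Hence $\omega(\gamma)\cap B_j=\emptyset$, that is $\omega(\gamma)\subset W_j$; letting $j$ range over $J$ yields $\omega(\gamma)\subset\bigcap_{j\in J}W_j=|\sigma|$, and combined with the hypothesis this gives $\omega(\gamma)=|\sigma|$.

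I do not expect a real obstacle here: all the geometric content — thickening a periodic orbit or a snail support on the spherical frontier into a trap, and realizing $|\sigma|$ as a nested intersection of the regions cut out by such curves — has already been packaged in the two lemmas immediately preceding the statement and in Remark~\ref{rk:entornodesnails}. The only point that requires a careful eye is the ``which side'' book-keeping, i.e. checking that $|\sigma|$ genuinely lies in the open component $A_j$ of $\partial M\setminus S_j$ rather than on $S_j$; but this is precisely what is encoded in the definition of $W_j$ as the adherence of the component of $\partial M\setminus S_j$ containing $|\sigma|$, together with the condition $S_j\ne|\sigma|$ built into that remark. Once this is granted, the conclusion is forced and nothing further needs to be computed.
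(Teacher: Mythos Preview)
Your proposal is correct and follows essentially the same route as the paper: fix one of the neighbourhoods from Remark~\ref{rk:entornodesnails}, apply Lemma~\ref{lema:uno} to the trap built over each $S_j$, use the inclusion $|\sigma|\subset\omega(\gamma)$ to identify which side is missed, and then intersect the resulting $W_j$ to squeeze $\omega(\gamma)$ down to $|\sigma|$. The only cosmetic difference is that you spell out explicitly the invocation of the trap-construction lemma and the ``which side'' argument, whereas the paper compresses these into a single sentence.
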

\begin{proof} In view of Remark \ref{rk:entornodesnails}, let us consider a neighborhood  $U$ of $\vert\sigma\vert$ which is the union of $\vert\sigma\vert$ and a family $\{S_j\}_{j\in J}$ of periodic orbits and snail supports different from $\vert\sigma\vert$. By Lemma  \ref{lema:uno}, for each $j\in J$ there is a connected component $V_j$ of $\partial M\setminus S_j$ such that
$V_j\cap \omega(\gamma)=\emptyset$. Let us note that $V_j$ is the connected component of $\partial M\setminus S_j$ not containing $\vert\sigma\vert$.  Let us put $W_j=\partial M\setminus V_j$, by the preceding observation and Remark \ref{rk:entornodesnails} we have that
$$
\bigcap_{j\in J}W_j=\vert\sigma\vert.
$$
Now $\vert\sigma\vert\subset \omega(\gamma)\subset \bigcap_{j\in J}W_j$. We obtain that
 $\omega(\gamma)=\vert\sigma\vert$.
\end{proof}
\section{Graph structure of the omega-limit set}
Let us consider a parameterized orbit $\sigma$ of $\mathcal L$ contained in $\partial M$. We say that the oriented support $\vert \sigma \vert_+$ of $\sigma$ is an {\em edge} for $\mathcal L$ if the $\alpha$-limit set $\alpha(\sigma)$ and the $\omega$-limit set $\omega(\sigma)$ are both equilibrium points of $\mathcal L$. In this way we obtain an oriented graph $\mathcal G$ such that
 \begin{enumerate}
 \item The vertices of $\mathcal G$ are the equilibrium points of $\mathcal L$ in $\partial M$.
 \item The (oriented) edges are defined as above.
 \item An edge   $\vert \sigma \vert_+$ joins $p$ with $q$ if $p=\alpha(\sigma)$ and $q=\omega(\sigma)$.
 \end{enumerate}
 An edge $\vert \sigma \vert_+$ is called a {\em loop} if $\alpha(\sigma)=\omega(\sigma)$.   An edge $\sigma$ is called a {\em skeleton edge} if it is contained in $\partial^2 M$, otherwise it is a {\em trace edge}. Note that we have only finitely many skeleton edges.
   An {\em oriented segment of length $n$} of ${\mathcal C}(\gamma)$ is a finite sequence
 $$
 p_0,\sigma_1,p_1,\sigma_2,p_2,\ldots,p_{n-1},\sigma_n,p_n
 $$
 such that each $\sigma_i$ is an edge in ${\mathcal C}(\gamma)$ starting at $p_{i-1}$ and ending at $p_i$. The oriented segment is a {\em cycle} if $p_0=p_n$. A subgraph $\mathcal C$ of $\mathcal G$ is called {\em cyclic} if each edge of $\mathcal C$ is contained in a cycle in $\mathcal C$ (note that a loop defines a cycle).
 \begin{quote}{\bf Notation:} If there is no confusion, we denote just $\sigma$ the edge $\vert\sigma\vert_+$.
 \end{quote}
\begin{lemma}
 \label{lema:arista}
 Let $\sigma$ be a non periodic parameterized orbit of $\mathcal L$ whose support is contained in $\omega(\gamma)$. Then $\sigma$ is an edge for $\mathcal L$.
\end{lemma}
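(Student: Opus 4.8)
The plan is to show that both $\alpha(\sigma)$ and $\omega(\sigma)$ are single equilibrium points. By symmetry (reversing the orientation of $\mathcal L$), it suffices to treat $\omega(\sigma)$; note that $\omega(\sigma)\subset\omega(\gamma)$ since $\omega(\gamma)$ is closed and invariant and contains $\vert\sigma\vert$, so $\omega(\sigma)$ is a compact connected invariant subset of $\omega(\gamma)\subset\partial M$. The strategy is the classical Poincar\'e--Bendixson dichotomy applied \emph{inside} the surface $\partial M$: either $\omega(\sigma)$ contains an equilibrium point, or it does not; in the latter case, by the Poincar\'e--Bendixson theorem on the two-sphere $\partial M$ applied to the induced foliation, $\omega(\sigma)$ is a periodic orbit. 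I would then argue that this periodic orbit case cannot occur under our hypotheses, and that in the former case $\omega(\sigma)$ reduces to a single point.

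First I would rule out that $\omega(\sigma)$ is a periodic orbit $\tau$. If $\tau\subset\omega(\sigma)\subset\omega(\gamma)$ were a periodic orbit, then by Proposition \ref{prop:periodicorbit} we would get $\omega(\gamma)=\vert\tau\vert$; but then $\vert\sigma\vert\subset\omega(\gamma)=\vert\tau\vert$ forces $\sigma$ to be periodic (an orbit contained in a periodic orbit's support coincides with it), contradicting the hypothesis that $\sigma$ is non-periodic. Hence $\omega(\sigma)$ contains an equilibrium point $q$. The second step is to upgrade "contains an equilibrium point" to "is a single equilibrium point". Here I would use the refined Poincar\'e--Bendixson analysis on the surface: if $\omega(\sigma)$ contained an equilibrium $q$ together with a nonstationary orbit $\rho$, then $\omega(\sigma)$ would be a graphic (poly-cycle) in $\partial M$ through equilibria, and I would construct a snail $\zeta=(\varsigma,d)$ associated to $\rho$ lying in $\omega(\gamma)$ by using a transversal to $\mathcal L$ in $\partial M$ at a regular point of $\rho$ and following the orbit $\varsigma$ inside $\partial M$; but then $\omega(\gamma)\cap\vert\zeta\vert\neq\emptyset$, contradicting the snail lemma above. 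Thus $\omega(\sigma)$ is exactly the equilibrium $q$. The same argument applied to $-\mathcal L$ shows $\alpha(\sigma)$ is a single equilibrium point, so $\vert\sigma\vert_+$ is an edge for $\mathcal L$.

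The main obstacle I anticipate is the construction of the snail from a nonstationary orbit in $\omega(\sigma)$: I need to verify that the relevant transversal curve $d$ inside $\partial M$, together with the spiralling orbit segment $\varsigma$, actually closes up into a topological embedding of ${\mathbb S}^1$, and crucially that its support (or the support of the limiting orbit $\varsigma$) is genuinely contained in $\omega(\gamma)$ so that the snail lemma applies. This requires knowing that $\vert\rho\vert\subset\omega(\gamma)$ — which holds because $\omega(\gamma)$ is closed and invariant and $\rho\subset\omega(\sigma)\subset\omega(\gamma)$ — and a Poincar\'e--Bendixson-type recurrence argument on the surface $\partial M$ to see that the orbit through a transversal point near $q$ returns, producing the spiral. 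An alternative cleaner route, avoiding explicit snails, is to invoke directly the surface Poincar\'e--Bendixson theorem on $\partial M\cong{\mathbb S}^2$: a compact connected invariant set containing equilibria and contained in the closure of a single orbit $\sigma$, with no periodic orbit available, must — if it is not a single point — be a graphic, and then one reaches a contradiction with the snail lemma by exhibiting a snail inside $\omega(\gamma)$ as above. I would present the argument in whichever form keeps the appeal to two-dimensional Poincar\'e--Bendixson theory most transparent.
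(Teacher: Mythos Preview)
Your argument is correct and follows essentially the paper's route. The only difference is that the paper does not split off the periodic-orbit case: whether $\omega(\sigma)$ is a periodic orbit or a graphic, the two-dimensional Poincar\'e--Bendixson theorem on $\partial M$ gives a return map on a transversal, so a segment of $\sigma$ itself between two consecutive crossings (closed by an arc of the transversal) is the snail, and $\vert\varsigma\vert\subset\vert\sigma\vert\subset\omega(\gamma)$ contradicts the snail lemma directly---your detour through Proposition~\ref{prop:periodicorbit} is valid but unnecessary, and in the write-up you should make explicit that $\varsigma$ is a piece of $\sigma$, not of the limiting orbit $\rho$.
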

\begin{proof} Note that the support of $\sigma$ is contained in $\partial M$. If the omega limit set of $\sigma$ is not an equilibrium point, by the two-dimensional Poincar\'{e}-Bendixson Theorem, it is a poly-cycle with a return map that allows us to produce a snail from $\sigma$, contradiction. In the same way we see that the alpha limit set of $\sigma$ is also an equilibrium point. Then $\sigma$ is an edge.
\end{proof}
\begin{remark}Let us note that using the arguments of the proof of Lemma \ref{lema:arista}, we also have that $\sigma$ passes ``only once'' through each flow-box for $\mathcal L$.
  \end{remark}
  As a consequence of Lemma \ref{lema:arista} and in view of Proposition \ref{prop:periodicorbit}, we see that the omega limit set $\omega(\gamma)$ is the support of a subgraph ${\mathcal C}(\gamma)$ of $\mathcal G$ unless it coincides with the support of a periodic orbit.

  We devote the rest of this section to give a proof of the following statement:
  \begin{proposition}
  \label{prop:ciclico}
   Assume that $\mathcal L$ has only isolated equilibrium points and that $\omega(\gamma)$ does not coincide with the support of a periodic orbit. Then $\omega(\gamma)$ is the support of a connected cyclic subgraph ${\mathcal C}(\gamma)$ of $\mathcal G$.
 \end{proposition}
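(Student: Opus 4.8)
The plan is to show that $\omega(\gamma)$ is the support of a subgraph ${\mathcal C}(\gamma)$ of $\mathcal G$ which is finite, connected, and cyclic. The first part is essentially in hand: since $\omega(\gamma)$ is not the support of a periodic orbit, Proposition~\ref{prop:periodicorbit} shows it contains no periodic orbit, so by Lemma~\ref{lema:arista} each of its non-equilibrium points lies on an edge and $\omega(\gamma)=|{\mathcal C}(\gamma)|$. Finiteness I would obtain from: the vertices are the equilibria in $\omega(\gamma)$, isolated in the compact $\partial M$, hence finitely many; and there are finitely many edges, each joining two vertices, because infinitely many orbits accumulating at a point of a two-dimensional face would yield a snail as in Lemma~\ref{lema:arista} and contradict Lemma~\ref{lema:uno} (skeleton edges being finite from the outset). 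What remains, and is the content of the proposition, is connectedness and cyclicity.

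Connectedness is soft once finiteness is known: distinct graph-components of ${\mathcal C}(\gamma)$ have disjoint supports (distinct edges are disjoint orbits, an open orbit meets no equilibrium, and distinct components have disjoint vertex sets), each of which is closed, being a finite union of closed arcs and points; so a splitting of ${\mathcal C}(\gamma)$ would split $\omega(\gamma)$, against the known connectedness of $\omega$-limit sets. For cyclicity, if ${\mathcal C}(\gamma)$ has a single vertex it is vacuously cyclic (a loop being a cycle), so assume at least two vertices, whence $\omega(\gamma)\ne\{v\}$ for every vertex $v$. The first real step is that each vertex $v$ has an \emph{outgoing} edge: pick $\gamma(t_n)\to v$ and a small ball $B\ni v$; from $\xi(v)=0$ one has $|\xi|\le C\,\mathrm{dist}(\cdot,v)$ near $v$, hence $\mathrm{dist}(\gamma(t),v)\le\mathrm{dist}(\gamma(t_n),v)\,e^{C(t-t_n)}$, so the arc of $\gamma$ issuing from $\gamma(t_n)$ stays in $B$ for a time $T_n\to\infty$ and then, as $\omega(\gamma)$ is not contained in $\overline{B}$, leaves $B$ at a point $z_n\in\partial B$, with $z_n\to z\in\omega(\gamma)\cap\partial B$ along a subsequence. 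For any fixed time $s$, flowing $z$ backward for time $s$ gives a point which is a limit of points on those arcs, hence lies in $\overline{B}$; so $\alpha(z)$ is a compact connected invariant subset of $\omega(\gamma)$ contained in a small neighborhood of the isolated equilibrium $v$, and as $\omega(\gamma)$ carries no periodic orbit this forces $\alpha(z)=\{v\}$. Thus the edge through $z\ne v$ is outgoing at $v$; in particular ${\mathcal C}(\gamma)$ has no sink, so every nonempty forward-closed subgraph of it contains a directed cycle.

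Now suppose for contradiction that some edge $\sigma_0$ from $p$ to $q$ lies on no directed cycle, i.e. there is no directed path from $q$ to $p$. Let $R$ be the set of vertices reachable from $q$; then $p\notin R$, the subgraph ${\mathcal C}_R$ spanned by $R$ and the edges issuing from $R$ is forward-closed (the head of such an edge lies again in $R$), and its support $K=|{\mathcal C}_R|$ is a proper closed subset of $\omega(\gamma)$ invariant under the forward flow. By the previous step ${\mathcal C}_R$ contains a simple directed cycle $\Delta$, whose support $|\Delta|\subseteq K$ is a Jordan curve in $\partial M\cong{\mathbb S}^2$. I would then argue as in Proposition~\ref{prop:periodicorbit}: $\gamma$ accumulates at $q\in K$, and by forward-invariance of $K$ together with the tracking estimate above it must shadow orbits remaining near $K$ for longer and longer times while still returning near $p\notin K$ infinitely often; from such a return one builds a Poincar\'{e}-Bendixson trap $\tau$ whose base is a Jordan curve near $|\Delta|$ — closed, semi-open, or open according as the associated first-return (holonomy) map is the identity, has an isolated fixed point, or produces a snail, exactly as in the trap constructions of the previous section but with Dulac-type transitions glued in at the saddle vertices of $\Delta$. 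Running a fundamental system of such traps as in Remark~\ref{rk:entornodesnails} and invoking Lemma~\ref{lema:uno} each time forces $\omega(\gamma)\subseteq K$, contradicting $p\in\omega(\gamma)\setminus K$. Hence every edge lies on a directed cycle, and ${\mathcal C}(\gamma)$ is cyclic.

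The main obstacle is this last step. One must make precise that $\gamma$, once near $q\in K$, stays near $K$ long enough to perform a genuine return, and then construct the Poincar\'{e}-Bendixson trap around the poly-cycle $\Delta$ inside the manifold with corners — handling the saddle vertices (where the wall is assembled out of Dulac maps), the corner and skeleton edges that $\Delta$ may traverse, and the sign of the transverse behaviour of $\partial M$ along $K$. This is the poly-cycle analogue of the trap constructions of the previous section and of Proposition~\ref{prop:periodicorbit}, and carrying it through in all corner configurations is the crux of the argument.
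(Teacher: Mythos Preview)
Your approach diverges substantially from the paper's, and the divergence matters because your final step cannot be completed under the stated hypotheses.

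The paper does \emph{not} attempt to build traps around poly-cycles here. Instead it proves a purely combinatorial criterion (Lemma~\ref{lema:cyc}): a graph with vertex set $S$ is cyclic iff for every nontrivial partition $S=T\cup T'$ there is an edge from $T$ to $T'$. It then verifies this criterion directly by a compactness argument on $\gamma$: fix disjoint nested neighbourhood systems $\{B_{T,n}\}$, $\{B_{T',n}\}$ of $T$ and $T'$, look at the intervals on which $\gamma$ travels from $B_{T,n}$ to $B_{T',n}$ without re-entering either, and extract a limit point $p\in\omega(\gamma)$ of the ``exit points'' $\gamma(s_I)$; a flow-box argument then shows the orbit through $p$ is an edge starting in $T$ and ending in $T'$. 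No trap, no holonomy, no Dulac transition is used; only the isolated-equilibria hypothesis enters, to make $S$ finite and to give the neighbourhood systems.

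Your route, by contrast, hinges on constructing a Poincar\'{e}--Bendixson trap around a poly-cycle $\Delta\subset K$. You correctly flag this as ``the main obstacle'', and it is a genuine one: such a trap requires controlling the passage of nearby orbits through each saddle vertex of $\Delta$, which is exactly what the \emph{weak Morse--Smale} hypotheses of Definition~\ref{def:ms} (hyperbolicity, no two-dimensional saddle connections, no infinitesimal saddle connections) are introduced to guarantee in later sections. Under the sole assumption of isolated equilibria, a vertex of $\Delta$ need not be hyperbolic, the local transition need not be a well-behaved Dulac map, and at a corner saddle the saturated wall can split along different skeleton edges; there is no reason a wall following $\Delta$ should close up or even exist. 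In short, you are invoking the machinery of Sections~\ref{sec:ms}--8 to prove a proposition that is deliberately placed \emph{before} those hypotheses are imposed. A secondary issue: your claim that ${\mathcal C}(\gamma)$ has only finitely many edges is neither asserted in the proposition nor established by your snail argument (infinitely many edges accumulating on a point need not produce a snail), and the paper's proof does not use or need it.
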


 We already know that ${\mathcal C}(\gamma)$ is connected. Hence, we only need to verify that ${\mathcal C}(\gamma)$ is a cyclic subgraph of $\mathcal G$. We will use the following characterization of cyclic graphs
 \begin{lemma}
  \label{lema:cyc}
  Let $\mathcal C$ be a connected oriented graph with a set $S$ of vertices. Then $\mathcal C$ is cyclic if and only if for any non-trivial partition $S=T\cup T'$ there is an edge starting at $T$ and ending at $T'$.
 \end{lemma}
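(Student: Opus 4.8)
The plan is to prove both implications by working with \emph{forward-closed} sets of vertices, i.e.\ subsets $T$ of $S$ such that no edge of $\mathcal C$ starts in $T$ and ends outside $T$ --- equivalently, subsets that contain every vertex reachable along an oriented segment from one of their elements. The partition hypothesis in the statement says exactly that $\mathcal C$ has no proper non-empty forward-closed set, and this reformulation makes everything transparent. Note that a ``cycle'' here is a closed oriented segment and need not be simple, which is what allows free concatenation of oriented segments.

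For the implication ``cyclic $\Rightarrow$ partition condition'' I would fix a non-trivial partition $S=T\cup T'$. Since $\mathcal C$ is connected there is an edge $e$ with one endpoint in $T$ and the other in $T'$ (otherwise the underlying undirected graph would split into the edges inside $T$ and those inside $T'$). If $e$ starts in $T$ and ends in $T'$ we are done, so assume $e$ runs from $b\in T'$ to $a\in T$. By hypothesis $e$ lies on a cycle $p_0,\sigma_1,p_1,\dots,\sigma_n,p_n=p_0$, with $\sigma_i=e$, $p_{i-1}=b$, $p_i=a$ for some $i$. Reading the cyclic list of vertices starting from $p_i=a\in T$, we must eventually arrive at $p_{i-1}=b\in T'$, so there is an index $j$ (mod $n$) with $p_{j-1}\in T$ and $p_j\in T'$; the edge $\sigma_j$ starts in $T$ and ends in $T'$, as required.

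For the reverse implication I would take an arbitrary edge $\sigma$ of $\mathcal C$, say from $p$ to $q$, and produce a cycle through it. Let $T$ be the set of all vertices reachable from $q$ by an oriented segment, with $q\in T$ via the trivial segment of length $0$. By construction $T$ is forward-closed. If $S\setminus T$ were non-empty, the partition $S=T\cup(S\setminus T)$ would be non-trivial and, by hypothesis, would admit an edge starting in $T$ and ending in $S\setminus T$, contradicting forward-closedness. Hence $T=S$; in particular $p\in T$, so there is an oriented segment $q=r_0,\tau_1,\dots,\tau_m,r_m=p$. Prepending $\sigma$ gives the cycle $p,\sigma,r_0,\tau_1,\dots,\tau_m,r_m=p$ containing $\sigma$ (if $\sigma$ is a loop this degenerates to $\sigma$ itself). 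Since $\sigma$ was arbitrary, $\mathcal C$ is cyclic.

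I do not expect any genuine obstacle: the lemma is purely combinatorial. The only points needing a little care are that ``cycle'' means a closed oriented segment (so that the concatenation step is legitimate) and that connectedness of $\mathcal C$ is used only in the first implication, namely to guarantee the existence of the edge $e$ between $T$ and $T'$; the second implication uses only the forward-closedness argument.
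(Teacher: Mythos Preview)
Your proposal is correct and follows essentially the same route as the paper: in both directions you argue exactly as the authors do, first using connectedness to find an edge between $T$ and $T'$ and walking around the cycle through it if it points the wrong way, then taking $T$ to be the set of vertices reachable from $q$ and deriving a contradiction from the partition hypothesis if $p\notin T$. Your ``forward-closed'' terminology is just a convenient name for the same idea.
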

 \begin{proof} Assume that $\mathcal C$ is cyclic. By connectedness, there is an edge $\sigma$ connecting a point $p\in T$ and another point $p'\in T'$. If $\sigma$ starts at $p$ and ends at $p'$, we are done. If $p'$ is the starting point and $p$ the end point of $\sigma$, we have a cycle
 $$
 p',\sigma,p=p_0,\sigma_1,p_1,\sigma_2,p_2,\ldots,p_{n-1},\sigma_n,p_n=p'.
 $$
 There is necessarily an index $j$ such that $p_j\in T$ and $p_{j+1}\in T'$ and then $\sigma_j$ has its origin in $T$ and its end point in $T'$.

 Conversely, take an edge $\sigma$ starting at a point $p$ and ending at $q$. Consider the set $T$
 of points $r\in S$ such that there is an oriented segment starting at $q$ and ending at $r$. If $p\in T$, we are done, since we can produce a cycle containing $\sigma$. Note that $q\in T$, since length zero segments are allowed. If $p\notin T$, then $T'=S\setminus T$ is nonempty and there is an edge $\tau$ starting at $t\in T$ and ending at $t'\in T'$, this gives a contradiction since $t'$ should also be in $T$.
   \end{proof}

   Let us start the proof of Proposition \ref{prop:ciclico}, by using the characterization given in Lemma \ref{lema:cyc}.
Denote by  $S\subset\mbox{Sing}{\mathcal L}$  the set of equilibrium points in $\omega(\gamma)$, that is $S$ is the set of vertices of the graph $\mathcal C$. It is a finite set contained in $\partial M$. Denote $S^*=\mbox{Sing}{\mathcal L}\setminus S$ and let us fix a non-trivial partition $S=T\cup T'$. Let $B_{S^*}$ be a compact neighborhood of $S^*$ such that $B_{S^*}\cap \omega(\gamma)=\emptyset$ and let us consider fundamental systems of compact neighborhoods $\{B_{T,n}\}_{n=0}^{\infty}$, respectively $\{B_{T',m}\}_{m=0}^{\infty}$ of $T$, respectively $T'$, such that for any $n\geq 0$ we have
$$
B_{T,n+1}\subset \stackrel{\circ}{B}_{T,n},\; B_{T',n+1}\subset \stackrel{\circ}{B}_{T',n}
$$
and moreover
$
B_{T,0}\cap B_{T',0}=B_{T,0}\cap B_{S^*}=B_{T',0}\cap B_{S^*}=\emptyset.
$

Since $S^*\cap \omega(\gamma)=\emptyset$, there is a time $t_0$ such that $\gamma(t)\notin B_{S^*}$ for any $t\geq t_0$. By an argument of compactness, we can assume that the parameterized orbit $\gamma(t)$ satisfies the following property:
\begin{quote} There is a fixed $\delta>0$ such that for any $t_0<t_1<t_2$ such that $\gamma(t_1)\in B_{T,0}$ and $\gamma(t_2)\in B_{T',0}$, then $t_2-t_1\geq \delta$.
\end{quote}
 Now, let us consider the set $A_n$ of closed intervals $I=[s_I,t_I]\subset {\mathbb R}_{\geq t_0}$ defined by the property that $t_0\leq s_I$ and
 $$
 \gamma(s_I)\in B_{T,n},\; \gamma(t_I)\in B_{T',n},\; \gamma((s_I,t_I)))\cap (B_{T,n}\cup B_{T',n})=\emptyset.
 $$
 Remark that $t_I-s_I\geq \delta$ and $I\cap I'=\emptyset$ for $I,I'\in A_n$, $I\ne I'$. Thus  $A_n$ is an ordinal equivalent to ${\mathbb N}$ with the ordering
 $$
 I\leq I'\Leftrightarrow s_I\leq s_{I'}.
 $$
 Let us note that if $m\geq n$ and  $J\in A_m$, there is at least one interval $I\in A_n$ such that $I\subset J$. Thus we have infinite sets $A_{n,m}$ for each $m\geq n\geq 0$ defined by
 $$
 A_{n,m}=\{I\in A_n;\mbox{ there is } J\in A_m \mbox{ such that } I\subset J\}.
 $$
  Note that $A_n=A_{n,n}\supset A_{n,m}\supset A_{n,m+1}$.
  Let us define the nonempty compact sets $K_{n,m}$ contained in $\omega(\gamma)\cap B_{T,n}$ for any pair $n\leq m$ as follows
 $$
 K_{n,m}=\overline{\{\gamma(s_{I}); I\in A_{n,m}\}}\cap {\partial M}.
 $$
 Note that $K_{n,m}\subset B_{T,n}\setminus B_{T,n+1}$ and $K_{n,m'}\subset K_{n,m}$ if $m'\geq m$. We have a nonempty compact set
 $$
 K_n=\bigcap_{m\geq n}K_{n,m}
 $$
 for each $n\geq 0$. Let us take a point $p\in K_0$. Denote by $\sigma_p$ a parameterized orbit of $\mathcal L$ such that $\sigma_p(0)=p$. Then $\sigma_p$ defines and edge of ${\mathcal C}$. Let us prove that the starting point of $\sigma_p$ is in $T$ and the end point of $\sigma_p$ is in $T'$. Denote by $\sigma_p^+$, respectively by $\sigma_p^-$, the restriction of $\sigma_p$ to $[0,+\infty)$, respectively to $(-\infty,0]$. Then, it is enough to prove that for any $n> 0$ we have
 $$
 \vert\sigma_p^-\vert\cap B_{T,n}\ne\emptyset,\quad \vert\sigma_p^+\vert\cap B_{T',n}\ne\emptyset.
 $$
 Let us fix $m>n>0$ and consider times $\alpha<0<\beta$ such that $$
 \sigma_p(\alpha)\in B_{T\cup T',m}, \; \sigma_p(\beta)\in B_{T\cup T',m}, \; \sigma_p((\alpha,\beta))\cap B_{T\cup T',m}=\emptyset,$$
 where $B_{T\cup T',m}=B_{T,m}\cup B_{T',m}$. Take a sequence of intervals $I_k\in A_{0,n}$ such that $I_k\subset J_k$ where $J_k\in A_n$ and $\lim_{k\rightarrow \infty}\gamma(s_{I_k})=p$. Now, we consider a flow-box $C$ for $\mathcal L$ around the central line $\sigma_p[\alpha,\beta]$. We take $C$ such that
 the initial and final parts of the flow-box are in the interior of $B_{T\cup T',n}$ and there is a central part around $p$  outside the compact set $B_{T\cup T',n}$, for times $(-\epsilon,\epsilon)$ . Now, following $\gamma$  along the flow-box with negative time starting at $\gamma(s_{I_k})$ we must arrive to $B_{T\cup T',n}$ but, since we are in the time interval $J_k$,  we reach exactly the point $\gamma(s_{J_k})$ that is in $B_{T,n}$.
 In the same way we can go in the positive time to reach $B_{T',n}$ exactly at $\gamma(t_{J_k})$. This shows that $\sigma^+_p$ intersects $B_{T,n}$ and $\sigma_p^-$ intersects $B_{T',n}$.
\section{Morse-Smale Line Foliations}
\label{sec:ms}
The classical two-dimensional conditions of Morse-Smale dynamical systems concern the singularities and the connections between them. The singularities are of hyperbolic type and saddle connections are avoided (for more details, see \cite{M-S}). In this section we define generic conditions for ${\mathcal L}, M, \partial M$  similar to
 Morse-Smale ones. Under that conditions, we shall see that $\omega(\gamma)$ is either a single point, a periodic orbit or the graph ${\mathcal C}(\gamma)$ is a single cycle.
\begin{definition}
\label{def:ms}
We say that ${\mathcal L}$ is of {\em weak Morse-Smale type} if and only if the following conditions are satisfied
\begin{enumerate}
\item The singularities of $\mathcal L$ form a finite set $S\subset \partial M$ and each one is hyperbolic, that is,   the eigenvalues have non-null real part.
\item There are no saddle connections in dimension two along $\partial M\setminus \partial ^2M$.
\item There are no infinitesimal saddle connections.
\end{enumerate}
\end{definition}
\begin{remark} The plane Morse-Smale conditions contain other requirements in addition to the hyperbolicity and the non-existence of saddle connections. More precisely, it is asked that the alpha and omega limit sets are either equilibrium points or periodic orbits; in our context we do not need to require this, in fact such conditions are automatically satisfied for the edges of $\omega(\gamma)$, in view of its graph structure.
In the papers \cite{Alo-C-C1, Alo-C-C2} the reader can find a study of the topological classification of three-dimensional real analytic vector fields whose reduction of singularities is of Morse-Smale type.
\end{remark}
Let us consider $\mathcal L$  of weak Morse-Smale type.  Along this section and the next one, we will explain the terms of Definition \ref{def:ms} and we will give a general local-global description of the situation.

Take a singular point $p\in S$, we have three possibilities
$$
p\in \partial M\setminus \partial^2M,\; p\in\partial^2M\setminus \partial^3M,\; p\in \partial^3M.
$$
Also $p$ may be an attractor (negative real part of the eigenvalues), a repelling point (positive real part of the eigenvalues) or a three-dimensional saddle (there are two eigenvalues with real parts of distinct sign).

Consider a singular point $p\in S\cup\omega(\gamma)$ and assume that $\omega(\gamma)$ has more than one point (in particular $\omega(\gamma)\not=\{p\}$). Then we have
\begin{enumerate}
\item The point $p$ cannot be an attractor or a repelling point of $\mathcal L$. More precisely, if $p$ is a repelling point of $\mathcal L$, then $p\notin\omega(\gamma)$ and if $p\in\omega(\gamma)$ is an attractor of $\mathcal L$, then $\omega(\gamma)=\{p\}$.
\item In the case that $p\in \partial M\setminus \partial^2M$, the restriction $\mathcal L\vert_{\partial M}$ of $\mathcal L$ to $\partial M$ cannot have an attractor or a repelling point at $p$.
    To see this, note that for $t>t_0$ the parameterized orbit $\gamma(t)$ is under a fixed ``height'' or distance to $\partial M$. Now, by the ``chimney shape'' of the dynamics around $p$ it is impossible to exit the ``chimney''
     once $\gamma(t)$, with $t>t_0$, is inside a neighborhood of $p$. That is, either $p\notin \omega(\gamma)$ or $\omega(\gamma)=\{p\}$.
\end{enumerate}
Thus, assuming that $\omega(\gamma)$ has more than one point, the vertices $p\in S\cap \omega(\gamma)$ of the graph ${\mathcal C}(\gamma)$ are of one of the following kinds:
\begin{enumerate}
\item {\em Saddle Corner.} The point $p$ belongs to $\partial ^3M$ and it is a three dimensional saddle. Note that the linear part of a generator of $\mathcal L$ at $p$ is diagonal if we write it in local coordinates $x,y,z$ such that $xyz=0$ is a local equation of $\partial M$, since the components of $\partial M$ are invariant for $\mathcal L$.
\item{\em Bi-Saddle Angle.}  The point $p$ belongs to $\partial ^2M\setminus \partial^3M$, it is a three dimensional saddle and the invariant variety of dimension one (stable or unstable) of $\mathcal L$ coincides locally at $p$ with the intersection of the two components $D_1$ and $D_2$ of $\partial M$ through $p$; that is, the invariant variety of dimension one coincides locally with the skeleton $\partial^2M$ at $p$. In particular the invariant variety of dimension two $\Delta$ is transversal to $\partial^2M$ and it gives two invariant local curves $\Gamma_i=\Delta\cap D_i$, $i=1,2$. More precisely, the restriction of $\mathcal L$ both to $D_1$ and $D_2$ is a two dimensional saddle.
\item{\em Saddle-Node Angle.} The point $p$ belongs to $\partial ^2M\setminus \partial^3M$, it is a three dimensional saddle and the invariant variety of dimension two (stable or unstable) of $\mathcal L$ at $p$ coincides with a component $D_1$ of $\partial M$ through $p$. The other component $D_2$ contains the invariant variety of dimension one. Thus, the restriction of $\mathcal L$ to $D_1$ is an attractor or a repelling point and the restriction to $D_2$ is a two dimensional saddle.
\item{\em Plane Saddle.}  The point $p$ belongs to $\partial M\setminus \partial^2M$, it is a three dimensional saddle and the invariant variety of dimension two (stable or unstable) of $\mathcal L$ at $p$ is transversal to $\partial M$ through $p$. The restriction of $\mathcal L$ to $\partial M$ has a two dimensional saddle at $p$.
\end{enumerate}
Now, we can precise condition (2) in Definition \ref{def:ms}: \begin{quote}

To say that there are no saddle connections in dimension two along $\partial M\setminus \partial ^2M$ means that given a component $D$ of $\partial M$ there are no bi-dimensional saddle connections of the restriction $\mathcal L\vert_D$ of $\mathcal L$ to $D$ along unstable-stable varieties contained in $D\setminus \partial^2 M$.
\end{quote}
\begin{remark}
We allow the existence of bi-dimensional saddle connections of  $\mathcal L\vert_D$ along the skeleton $\partial^2 M$. In order to understand why those connections are possibly ``generic", let us consider a typical situation of reduction of singularities, where $M$ has been obtained by repeated blow-ups
$
M\rightarrow {\mathbb R}^3.
$
In  the case of a single blow-up, it is possible to do a generic deformation of $\mathcal L$ in order to break saddle connections along the exceptional divisor, by using an method based on Melnikov's integrals. This is a blow-up version of the generic nature of Morse-Smale conditions. But when we have more than one blow-up, the intersection of two divisors is ``rigid'' for such type of deformations and we cannot break the saddle connections if that way. For more details, the reader can see \cite{Cam}.
\end{remark}

\begin{remark}
 \label{rk:verticesgrafo}
 Let $p\in {\mathcal C}(\gamma)$ be a hyperbolic singularity of $\mathcal L$ being a saddle corner, a bi-saddle angle, a saddle-node angle or a plane saddle. If there is an edge of ${\mathcal C}(\gamma)$ arriving to $p$, there is necessarily an edge exiting $p$. Moreover, the edges of ${\mathcal C}(\gamma)$ arriving to $p$ must be contained in the stable variety  and the edges exiting $p$ in the unstable variety. So the combinatorics of the possible options is the following one\begin{enumerate}
\item If $p$ is a saddle corner or a saddle-node angle and there are two or more edges arriving to $p$ (respectively exiting $p$), there is only one edge exiting $p$ (respectively arriving to $p$). This last one is an skeleton edge in the case of a saddle corner and a trace edge in the case of a saddle-node angle.
\item If $p$ is a bi-saddle angle or a plane-saddle, there is at most two edges arriving to $p$ and at most two edges exiting $p$. In the case of a bi-saddle angle, the edges exiting are of skeleton type if the edges arriving are of trace type and conversely. In the case of a plane saddle angle, the arriving and exiting edges are all of trace type.
\end{enumerate}
\end{remark}
We can graphically represent the four types of singularity in a plane way with arrows, a part of a circle for the two dimensional invariant variety and lines corresponding to the skeleton $\partial^2M$ (we recall that $\partial M$ is homeomorphic to ${\mathbb S}^2$).
\begin{center}
\strut\vspace{10pt}\par
\includegraphics[scale=0.5]{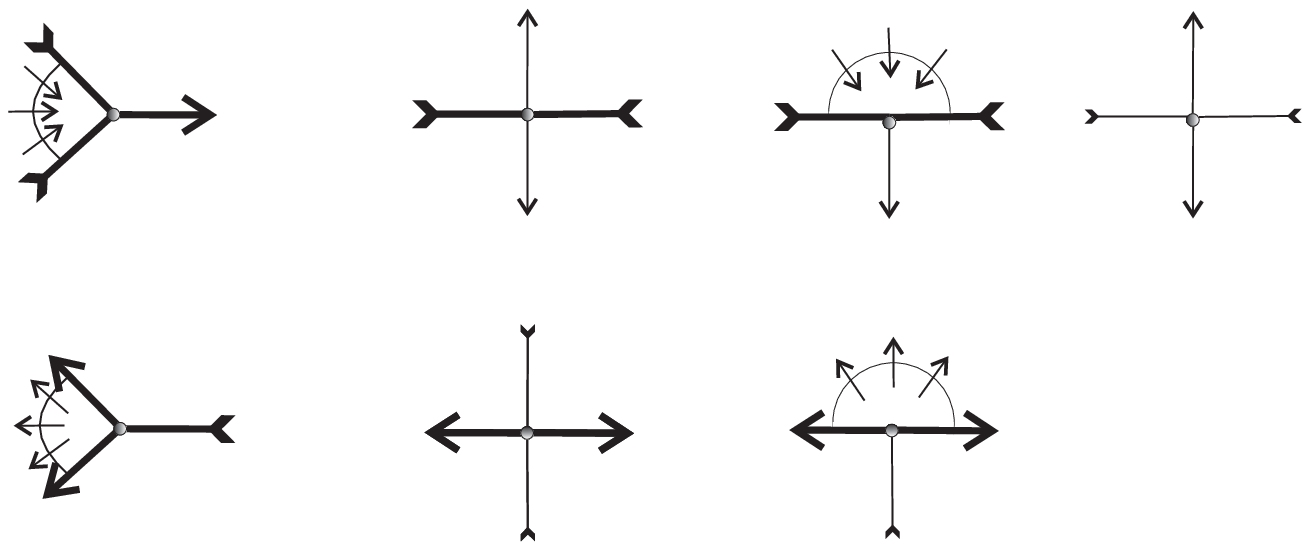}
\end{center}
\section{Weights and infinitesimal saddle connections}
\label{sec:weights}
In this section we give a quick overview of the definitions given in \cite{Alo-C-C1, Alo-C-C2} concerning infinitesimal saddle connections in dimension three.

Consider a parameterized real analytic curve
$$c:[1,\infty)\rightarrow {\mathbb R}^2_{>0}\subset {\mathbb R}^2_{\geq 0};\quad c(t)=(c_1(t),c_2(t)),$$ such that $c'(t)\ne0$ for all $t>1$ and $\lim_{t\rightarrow \infty }c(t)=(0,0)\in {\mathbb R}^2$. We say that $c$ has {\em weight $\rho>0$ at the origin} if there are constants $0<k_1<k_2<\infty$ and $t_0>1$ such that
$$
k_1 c_1(t)^\rho < c_2(t)< k_2 c_1(t)^\rho, \mbox{ for all } t>t_0.
$$
For a given subset $C\subset {\mathbb R}^2_{>0}$ we say that $C$ has weight $\rho$ if there is a parameterized curve $c$ of weight $\rho$ such that $C=\vert c\vert$.

Not all real analytic curves $c$ as above have a well defined positive weight. One can think about $c(t)=(t,\exp(-t))$, where the weight should be $\infty$ or an oscillating case as
$$c(t)=(t, \sin^2t^{\rho_1}+ \cos^2t^{\rho_2}),\quad 0<\rho_1<\rho_2,$$
where there is no weight at all. Note that if $c$ has weight $\rho$, then $\bar c$ has weight $1/\rho$ where $\bar c (t)=(c_2(t),c_1(t))$.
\begin{remark}\label{rmk:weightinvariance}
If $c$ has weight $\rho$, then $f\circ c$ has also weight $\rho$, for any germ of positively oriented analytic diffeomorphism $f:{\mathbb R}^2_{\geq 0}\rightarrow {\mathbb R}^2_{\geq 0}$.
 \end{remark}

Let us consider a vector field $\xi$ in a neighborhood of the origin of ${\mathbb R}^3$ given by
$$
\xi=(1+a(x,y,x))x\frac{\partial}{\partial x}+(-\lambda+b(x,y,x))y\frac{\partial}{\partial y}+ (-\mu+c(x,y,x))z\frac{\partial}{\partial z},
$$
where $a(0)=b(0)=c(0)=0$ and $\lambda,\mu\in {\mathbb R}_{>0}$. It gives a three-dimensional saddle at the origin, the unstable variety is the $x$-axis $y=z=0$, the stable variety is $x=0$ and the coordinate planes $xyz=0$ are invariant. We assume that $a,b,c$ are ``small enough'' to assure that $\xi$ is transversal to the planes $x=\epsilon$ and to the cylinders $y^2+z^2=\epsilon$, for $0<\epsilon\leq 1$, this is always possible up to a homothecy. Let us consider the inclusion
\begin{eqnarray*}
\phi_x:{\mathbb R}^2_{>0}\hookrightarrow {\mathbb R}^3;\quad \phi(u,v)=(1,u,v)
\end{eqnarray*}
and  the {\em fences}
\begin{eqnarray*}
\Phi:[0,1]\times[0,1]\rightarrow {\mathbb R}^3_{\geq 0}; \quad \Phi(u,v)=(v, \cos (\pi u/2),\sin (\pi u/2)).\\
\bar{\Phi}:[0,1]\times[0,1]\rightarrow {\mathbb R}^3_{\geq 0}; \quad \Phi(u,v)=(v,\sin (\pi u/2),\cos (\pi u/2)).
\end{eqnarray*}
Finally, let $B$ be the cylinder $0\leq x\leq 1, 0\leq y^2+z^2\leq 1, 0\leq y,z$.
The next proposition gives a dynamical interpretation of the definition of the weight
\begin{proposition}\label{prop:weight}{\cite{Alo-C-C1}} Consider a subset $C\subset {\mathbb R}^2_{>0}$ and let $S\subset {\mathbb R}^3_{>0}$ be the saturation  of $\phi_x(C)$ by the $\xi$-flow in $B$. The following statements are equivalent
\begin{enumerate}
\item $C$ has weight $\mu/\lambda$.
\item $\Phi^{-1}(S)$ is the support of a real analytic curve $$d:[1,\infty)\rightarrow [0,1]\times(0,1]$$ such that $\lim_{t\rightarrow \infty}d(t)=(\theta,0)$, where $0<\theta<1$.
\end{enumerate}
\end{proposition}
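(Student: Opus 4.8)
The plan is to realize $\Phi^{-1}(S)$ as the image of $C$ under a composition of transition maps, to reduce that composition to the holonomy of the linear model $\xi_0=x\frac{\partial}{\partial x}-\lambda y\frac{\partial}{\partial y}-\mu z\frac{\partial}{\partial z}$ by maps that preserve both properties in the statement, and to carry out the weight bookkeeping explicitly in the model. Two monotonicity facts make the geometry rigid: along any orbit of $\xi$ inside $B$ one has $\dot x=(1+a)x>0$ and $\frac{d}{dt}(y^2+z^2)=2(-\lambda+b)y^2+2(-\mu+c)z^2<0$. Since every point of $\phi_x(C)$ has $y,z>0$, its orbit stays off the invariant axes, meets $\{x=1\}$ only there, and crosses the fence $\Phi([0,1]\times[0,1])$ (the part of $\{y^2+z^2=1\}$ in $B$) exactly once on entering $B$; hence $S$ is swept out by the backward orbit segments joining $\phi_x(C)$ to the fence, and $\Phi^{-1}(S)$ is exactly the image of $C$ under the holonomy map $\Pi$ from the section $\{x=1\}$ to the fence, expressed in the coordinates $(u,v)\mapsto(v,\cos(\pi u/2),\sin(\pi u/2))$. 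In particular $\Phi^{-1}(S)$ is the support of a real analytic arc as soon as $C$ is.

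Next I would factor $\Pi$ through a small cube $Q=\{0\leq x,y,z<\varepsilon\}$ around the saddle as $\Pi=\Pi_3\circ\Pi_2\circ\Pi_1$, where $\Pi_1$ runs from $\{x=1\}$ to $\{x=\varepsilon\}$ along the flow near the regular orbit $\{y=z=0\}$ of the unstable manifold, $\Pi_2$ runs from $\{x=\varepsilon\}$ backward to the small cylinder $\{y^2+z^2=\varepsilon^2\}$ inside $Q$, and $\Pi_3$ runs from that cylinder out to the fence along the flow near the regular part of the stable manifold $\{x=0\}$. The maps $\Pi_1$ and $\Pi_3$ are germs, at the relevant corner points, of analytic diffeomorphisms between quadrant sections; by Remark \ref{rmk:weightinvariance} they preserve the property of having weight $\mu/\lambda$, and they visibly preserve the property of being the support of a real analytic arc converging to an interior boundary point. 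So the whole statement collapses to the analysis of $\Pi_2$, the holonomy of a single hyperbolic saddle between two of its natural transversals.

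For $\Pi_2$ the cleanest route is a smooth (or analytic, when the non-resonance conditions are met) linearization of $\xi$ on $Q$ keeping the three coordinate planes invariant. In the linear model the orbit through $(\varepsilon,u,v)$ leaves $Q$ at the point $(X,y_1,z_1)$ of the cylinder $\{y^2+z^2=\varepsilon^2\}$, where $X\in(0,\varepsilon)$ is the exit value of $x$; one gets $y_1=\varepsilon^{\lambda}uX^{-\lambda}$ and $z_1=\varepsilon^{\mu}vX^{-\mu}$, hence the exact identity $z_1/y_1^{\mu/\lambda}=v/u^{\mu/\lambda}$, while $X\to 0$ as $(u,v)\to 0$ along $C$. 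Feeding these into $y_1^2+z_1^2=\varepsilon^2$, an elementary estimate on that circle shows that the angular coordinate of the image of $C$ is trapped inside a compact subinterval of $(0,1)$ — and converges to an interior value — exactly when $v/u^{\mu/\lambda}$ is trapped between two positive constants, respectively converges; transported back through the weight-preserving maps $\Pi_1$ and $\Pi_3$ this is precisely condition (1). Running the two implications separately gives the equivalence.

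I expect the real difficulty to sit entirely in step $\Pi_2$, namely in justifying the passage through the saddle without relying on a linearization that may not exist. If an analytic or smooth linearization is not available — resonances among $1,-\lambda,-\mu$ produce resonant monomials, which remain compatible with the monomial weight accounting but this must be checked — one falls back on Gronwall-type estimates for $\frac{d\ln y}{d\ln x}=\frac{-\lambda+b}{1+a}$ and $\frac{d\ln z}{d\ln x}=\frac{-\mu+c}{1+a}$. The naive estimate fails, because the length of the path in the variable $\ln x$ blows up as the orbit approaches the corner; one must instead exploit that the correction terms $b+\lambda a$ and $c+\mu a$ vanish at the origin, so that they are $O(x+y+z)$ along the orbit, and then bootstrap with the monomial asymptotics $y\asymp u\,x^{-\lambda}$, $z\asymp v\,x^{-\mu}$ along the orbit to see that the accumulated errors in the exponents stay bounded (and bounded away from zero after exponentiation). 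Showing in this way that the saddle holonomy is \emph{quasi-monomial} up to bounded positive factors is where hyperbolicity does the work; the two outer transitions and the final computation on the circle are routine.
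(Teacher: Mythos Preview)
The paper does not prove this proposition at all; it is quoted from \cite{Alo-C-C1} and used as a black box, so there is no in-paper argument to compare yours against. Your plan --- factor the transition as (regular holonomy near the unstable axis) $\circ$ (saddle passage) $\circ$ (regular holonomy near the stable plane), kill the two outer factors with Remark~\ref{rmk:weightinvariance}, and reduce to the monomial identity $z_1/y_1^{\mu/\lambda}=v/u^{\mu/\lambda}$ in the linear model, with a Gronwall bootstrap on $d\ln y/d\ln x$ and $d\ln z/d\ln x$ to absorb the perturbations $a,b,c$ --- is the standard route and is essentially what is done in \cite{Alo-C-C1}.

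One point you should not gloss over. Condition~(1) only asks that $c_2/c_1^{\mu/\lambda}$ be \emph{bounded between two positive constants}; condition~(2), as written, asks that the angular coordinate on the fence \emph{converge} to a single $\theta\in(0,1)$. Your own linear-model computation shows precisely that boundedness of the ratio is equivalent to the angle being trapped in a compact subinterval of $(0,1)$ --- you say as much with ``trapped \ldots\ respectively converges'' --- and an oscillating analytic curve such as $c(t)=\bigl(1/t,\,(2+\sin t)\,t^{-\mu/\lambda}\bigr)$ has weight $\mu/\lambda$ while its fence image has no angular limit. So (2)$\Rightarrow$(1) goes through cleanly, but (1)$\Rightarrow$(2) does not hold under the definition of weight given here; the statement as reproduced is slightly stronger than what the argument (yours or the one in \cite{Alo-C-C1}) actually yields. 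For every use made of the proposition in this paper --- Proposition~\ref{prop:invtransweight} and the trap constructions in the last two sections --- it is the ``trapped in a compact subinterval'' version that is needed, and that you do establish.
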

\begin{remark} The construction of proposition \ref{prop:weight} is reversible in the following sense. Consider a real analytic curve $$d:[1,\infty)\rightarrow [0,1]\times(0,1]$$ such that $\lim_{t\rightarrow \infty}d(t)=(\theta,0)$, where $0<\theta<1$ and let $D=\vert d\vert$ be its support. If $S$ is the saturation of $\Phi(D)$ by the $\xi$-flow in $B$, then $C=\phi_x^{-1}(S)$ has weight $\mu/\lambda$.
\end{remark}

We define $\mu/\lambda$ to be the {\em $(y,z)$-weight of $\xi$ at the origin,} once we chose the relative ordering of the invariant planes $y=0$, $z=0$; same definition for the reversed dynamics $-\xi$. Note that the  $(z,y)$-weight of $\xi$ at the origin is $\lambda/\mu$. In order to assign a well defined weight to each saddle corner, we fix once for all an ordering for the components of $\partial M$. In each saddle corner we choose local coordinates $x,y,z$ such that $\partial M$ is given by $xyz=0$, the coordinate plane $x=0$ is either stable or unstable and $y=0$ precedes $z=0$ in the fixed ordering. Note  that this global ordering allows us to define also a weight for the singularities of bi-saddle angle type in the same way.

We have the following {\em weight transitions} through three-dimensional saddles
\begin{proposition}\label{prop:invtransweight}{\cite{Alo-C-C1}} Consider a subset $C'\subset {\mathbb R}^2_{>0}$ of weight $\rho'\ne\mu/\lambda$ and let $S'$ be the saturation  of $\phi_x(C')$ by the $\xi$-flow in $B$. We have
\begin{enumerate}
\item If $\rho'>\mu/\lambda$, then $\Phi^{-1}(S')$ has weight $1/(\lambda\rho'-\mu)$.
\item If $\rho'<\mu/\lambda$, then $\bar\Phi^{-1}(S')$ has weight $\rho'/(\mu-\lambda\rho')$.
\end{enumerate}
Inversely, consider a subset $D'\subset {\mathbb R}^2_{>0}$ of weight $\rho''$ and let $S'_y$, respectively $S'_z$, be the saturation of $\Phi(D')$, respectively of $\bar\Phi(D')$, by the $\xi$-flow in $B$.
We have
\begin{enumerate}
\item[a)] $\phi_x^{-1}(S'_y)$ has weight $(1+\mu\rho'')/\lambda\rho''$.
\item[b)] $\phi_x^{-1}(S'_z)$ has weight $\mu\rho''/(1+\lambda\rho'')$.
\end{enumerate}
\end{proposition}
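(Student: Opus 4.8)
The final statement to prove is Proposition \ref{prop:invtransweight}: the weight transition formulas through a three-dimensional saddle.

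\medskip

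The plan is to reduce everything to the one-variable asymptotic behavior of the flow of $\xi$ near the saddle, and then carry out an explicit computation with the ``linear model''. First I would recall that the flow of the model field $\xi_0 = x\,\partial/\partial x - \lambda y\,\partial/\partial y - \mu z\,\partial/\partial z$ starting at a point $(1,u,v)$ on the face $x=1$ is $(e^t, u e^{-\lambda t}, v e^{-\mu t})$; the orbit leaves the cylinder $B$ either through a face of $\Phi$ (the face $y^2+z^2=1$ near $y\gg z$) or of $\bar\Phi$ (near $z\gg y$), depending on whether $u^{1/\lambda}$ dominates $v^{1/\mu}$ or not, i.e. on whether the weight $\rho'$ of the starting curve $C'$ is $>\mu/\lambda$ or $<\mu/\lambda$. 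Concretely, if $C' = \{(c_1(s), c_2(s))\}$ with $k_1 c_1^{\rho'} < c_2 < k_2 c_1^{\rho'}$, then the $\xi$-orbit through $\phi_x(c_1(s),c_2(s))$ meets the cylinder $y^2+z^2=1$ at a time $t(s)$ determined (up to bounded factors) by $c_1(s)^{-\lambda} y^2 + c_1(s)^{-2\mu}(\text{something}) \asymp 1$; tracking which coordinate wins gives the exit point and its asymptotics in the $\Phi$- or $\bar\Phi$-coordinates. Matching exponents then yields $1/(\lambda\rho'-\mu)$ in case $\rho'>\mu/\lambda$ and $\rho'/(\mu-\lambda\rho')$ in case $\rho'<\mu/\lambda$, and the inverse directions (a) and (b) are the same computation read backwards, solving for the exponent of $\phi_x^{-1}(S')$ in terms of the exponent $\rho''$ on the $\Phi$- or $\bar\Phi$-face.

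\medskip

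The key steps, in order, would be: (i) write down the model flow and, for a point with $y$-coordinate $\asymp c_1^{\rho'}$ and $x$-coordinate $c_1$, compute the exit time and the exit point on $B$, keeping all estimates up to multiplicative constants (this is exactly the estimate already packaged as Proposition \ref{prop:weight} in the degenerate case $\rho'=\mu/\lambda$, now done off the critical exponent); (ii) read off the exponents to get formulas (1), (2) and, inverting, (a), (b) for the model $\xi_0$; (iii) upgrade from the model to the general $\xi = (1+a)x\,\partial_x + (-\lambda+b)y\,\partial_y + (-\mu+c)z\,\partial_z$. For step (iii) I would use that the transition maps of $\xi$ between the faces $x=1$, $\Phi$, $\bar\Phi$ are germs of analytic diffeomorphisms of quadrants (this uses that $a,b,c$ vanish at $0$, that $\xi$ is transversal to $x=\epsilon$ and to $y^2+z^2=\epsilon$, and hyperbolicity) and then invoke Remark \ref{rmk:weightinvariance}: weight is invariant under positively oriented analytic diffeomorphisms of ${\mathbb R}^2_{\ge 0}$. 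Thus it suffices to prove the exponent formulas for the linear model, since composing with the (analytic, weight-preserving) corrections to the transition maps does not change the weight. Actually, one cannot quite reduce to the exactly-linear $\xi_0$ since the corner transition is genuinely nonlinear; but one reduces to the ``monomial'' normal form (Poincar\'e–Dulac / integrability of $xyz=0$-invariant saddles) whose transition is computed as above, and the remaining conjugation is analytic, hence weight-preserving.

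\medskip

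The main obstacle I expect is step (iii): controlling the effect of the nonlinear terms $a,b,c$ on the weight. One must be sure that the Dulac-type transition map through a hyperbolic saddle with three invariant coordinate planes has the form ``monomial map composed with an analytic diffeomorphism of the quadrant'', with no logarithmic or flat corrections that would destroy the clean rational exponent; resonances among $1,-\lambda,-\mu$ are exactly where this could fail, so either one assumes genericity (non-resonance) or one checks that resonant monomial terms still produce a map whose weight behavior matches the formula. Granting the normal-form statement (which is what \cite{Alo-C-C1} presumably establishes and which we may cite), the rest is the bookkeeping of exponents in step (i)–(ii), and the inversion giving (a) and (b) is purely algebraic: solve $\rho'' = 1/(\lambda\rho-\mu)$ for $\rho$ to get $\rho = (1+\mu\rho'')/(\lambda\rho'')$, and $\rho'' = \rho/(\mu-\lambda\rho)$ for $\rho$ to get $\rho = \mu\rho''/(1+\lambda\rho'')$, which is exactly the asserted content.
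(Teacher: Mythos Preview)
The paper does not prove this proposition at all: it is stated with an explicit citation to \cite{Alo-C-C1} and no argument is given in the present paper. So there is no proof here to compare your proposal against; the authors simply import the result.

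That said, your sketch is essentially how the cited result is established, and your linear-model computation is correct (the exponent bookkeeping in (i)--(ii) checks out, and the algebraic inversion you describe for (a),(b) is exactly right). The one place where your plan is shaky is step (iii). Your first formulation---that the transition maps between the face $x=1$ and the fences $\Phi,\bar\Phi$ are germs of analytic diffeomorphisms of quadrants---is false in general: these are Dulac-type corner transitions and are typically not analytic, so Remark~\ref{rmk:weightinvariance} cannot be applied to them directly. You immediately retract and propose a normal-form reduction instead, but even that is delicate: analytic conjugation to a monomial model requires a non-resonance hypothesis on $(1,-\lambda,-\mu)$ that the proposition does not assume. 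The robust way through (and what \cite{Alo-C-C1} actually does) is to work directly with the inequalities defining the weight: write $y'/y=-\lambda+b$, $z'/z=-\mu+c$, integrate to get $y(t)=y(0)e^{-\lambda t+O(1)}$, $z(t)=z(0)e^{-\mu t+O(1)}$ along orbits in $B$, and observe that these $O(1)$ errors become bounded multiplicative constants on the exit face---exactly what the weight definition tolerates. No linearization or normal form is needed; the weight is coarse enough to absorb the nonlinear perturbation directly.
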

Let us go to the definition of infinitesimal saddle connections. A {\em positively orien\-ted weighted chain of length $n$} of saddles is a sequence
$$
{\mathcal S}=\{(p_{i-1},\sigma_i,\rho_i,p_i)\}_{i=1}^n
$$
satisfying the following properties:
\begin{enumerate}
\item Each $p_j$ is a saddle corner, for $j=1,2,\ldots, n-1$. Moreover $p_0$ and $p_n$ are either saddle corners or bi-saddle angles.
\item Each $\sigma_j$ is a parameterized orbit of $\mathcal L$ contained in $\partial^2M$, that gives an edge of $\mathcal L$ starting at  $p_{j-1}$ and ending at  $p_j$, for $j=1,2,\ldots,n$.
\item The weights $\rho_j>0$ and the edges $\sigma_j$ respect the transition rules stated in Proposition \ref{prop:invtransweight}, for $j=1,2,\ldots,n-1$.
\end{enumerate}
Let us explain the last statement. Consider a step $j$, with $1\leq j\leq n-1$. We take a transversal plane $\Delta_j$ to $\sigma_j$ and we draw a curve $C\subset \Delta_j$ of weight $\rho_j$ (recall that we have fixed an ordering for the two components of $\partial M$ containing $\sigma_j$). We do the saturation of $C$ through the singularity $p_j$. We assume that this saturation is adherent to $\sigma_{j+1}$ and that $\rho_{j+1}$ is the weight obtained by application of the corresponding case of Proposition  \ref{prop:invtransweight}. In particular there is one forbidden situation: it is not possible that $\rho_j$ is the weight of $p_j$ and simultaneously $\sigma_j$ is the invariant variety of dimension one for $p_j$, since in this case we apply Proposition \ref{prop:weight}.

The {\em reversed chain} $-{\mathcal S}$ of $\mathcal S$ is given by
$$
-{\mathcal S}=\{(q_{i-1},\tau_i,\omega_i,q_i)\}_{i=1}^n;\quad (q_{i-1},\tau_i,\omega_i,q_i)=(p_{n-i+1},\sigma_{n-i+1},\rho_{n-i+1},p_{n-i}).
$$
A {\em negatively oriented weighted chain} ${\mathcal T}$ is a chain of the form ${\mathcal T}=-{\mathcal S}$, where $\mathcal S$ is a positively oriented weighted chain.

We say that a weighted chain $\mathcal S$ (positively or negatively oriented) is {\em associated to $p_0$} if and only if $\rho_1$ is the weight of $p_0$ and $\sigma_1$ is the invariant variety of dimension one for $p_0$. Two such sequences of lengths $n\leq n'$ coincide up to the step $n$.
\begin{definition} An {\em infinitesimal saddle connection} is a weighted chain of saddles
$$
{\mathcal S}=\{(p_{i-1},\sigma_i,\rho_i,p_i)\}_{i=1}^n
$$
that is associated to $p_0$ and whose reversed chain $-{\mathcal S}$ is also associated to $p_n$.
\end{definition}
In this way we have a precise meaning for all the conditions in Definition \ref{def:ms}.
\section{Accumulation under weak Morse-Smale conditions}
In order to prove Theorem \ref{th:msth}, we assume that $\omega(\gamma)$ is neither a point nor a periodic orbit.  We have the following  possibilities
\begin{enumerate}
\item All edges in ${\mathcal C}(\gamma)$ are trace edges.
\item There are skeleton and trace edges in ${\mathcal C}(\gamma)$.
\item All edges in ${\mathcal C}(\gamma)$ are skeleton edges.
\end{enumerate}
In this section we consider the case that all  edges in ${\mathcal C}(\gamma)$ are trace edges, the other situations are considered in next section.

Let us start with an edge $\sigma$. Then  $\sigma$ is contained in a single component $D$ of $\partial M$. Let $p_0,p_1$ be respectively the origin and the end of $\sigma$. Since there are no bi-dimensional saddle connections along $\partial M\setminus \partial^2M$, we have one of the following situations (not mutually exclusive)
\begin{enumerate}
\item[i)] ${\mathcal L}\vert_D$ has a repelling point at $p_0$.
\item[ii)] ${\mathcal L}\vert_D$ has an attractor at $p_1$.
\end{enumerate}
Let us consider first the case ii), where $p_1$ is an attractor of ${\mathcal L}\vert_D$. The only possibility is that $p_1$ is a saddle-node angle, where $D$ is the invariant variety of dimension $2$ and moreover the only edge exiting $p_1$ is the invariant variety $\sigma_2$ of dimension one, contained in another component $D_2$ of $\partial M$. To see this, note that if $p_1$ were a saddle corner, then the only exiting edge $\sigma_2$ should be contained in the skeleton $\partial^2M$. Now, the origin $p_1$ of $\sigma_2$ is a bi-dimensional saddle for ${\mathcal L}\vert_{D_2}$, this implies that the end $p_2$ of $\sigma_2$ is an attractor of ${\mathcal L}\vert_{D_2}$. We repeat the argument at $p_2$. In this way the obtain an infinite sequence
$$
\sigma=\sigma_1,p_1,\sigma_2,p_2,\sigma_3,p_3,\ldots
$$
The number of equilibrium points is finite. Thus, not all the points $p_i$ are distinct and, up to renumber points and edges, we have a finite sequence
$$
{\mathcal C}=\{
p_0,\sigma_1,p_1,\sigma_2,p_2,\sigma_3,p_3,\ldots,p_{n-1},\sigma_n,p_n=p_0\}.
$$
where all the points are distinct and $n\geq 1$. Hence, for each $i=1,2,\ldots,n$, we have that $\sigma_i$ is an edge of ${\mathcal C}(\gamma)$ contained in a single component $D_i$ of $\partial M$, the origin $p_{i-1}$ of $\sigma_i$  is a bi-dimensional saddle of ${\mathcal L}\vert_{D_i}$ and the end $p_i$ of $\sigma_i$ is a bi-dimensional attractor of ${\mathcal L}\vert_{D_i}$.

\begin{center}
\strut\vspace{10pt}\par
\includegraphics[scale=0.4]{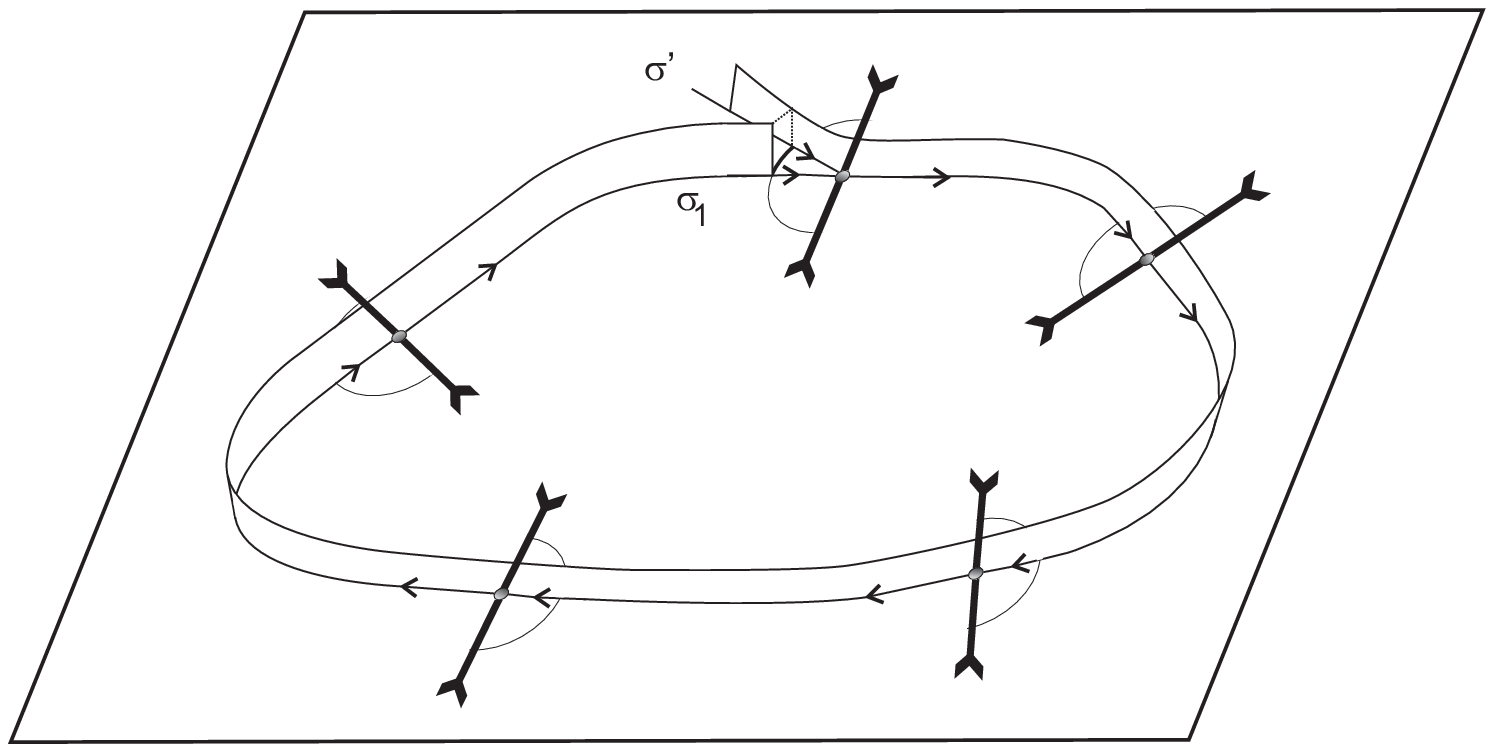}
\par\strut\vspace{10pt}\par
\end{center}

Now, let us see that the sequence $\mathcal C$ is the whole graph ${\mathcal C}(\gamma)$, and thus we have a single cycle. Let us reason by contradiction assuming that ${\mathcal C} \ne{\mathcal C}(\gamma)$. Since ${\mathcal C}(\gamma)$ is connected, there is an edge $\sigma'$ different from $\sigma_1,\sigma_2,\ldots,\sigma_n$ such that either the origin or the end of $\sigma'$ is one of the points $p_0,p_1,\ldots,p_{n-1}$. But the only edge exiting $p_i$ is $\sigma_i$ and thus, up to a new circular renumbering, we can assume that $p_1$ is the end point of $\sigma'$ and $\sigma'\ne\sigma_1$. Consider a small semi-disc $\Delta\subset D_1$ around $p_1$. Then $\sigma_1$ and $\sigma'$ cut the frontier ${\mathbb S}$ of $\Delta$ in two different points $q_1,q'$, where $q_1=\sigma_1(t_0)$. Consider a small analytic curve $C$ transversal to $D_1$ at $q'$ and let $\sigma'_+$ be the positive parameterized orbit starting at $q'$, whose end point is $p_1$. Let $S$ be the saturation of $C$ by the flow. Near $p_1$, we see that $\sigma_2$ is contained in the adherence of $S$ and in fact coincides locally with the intersection of the adherence of $S$ with $D_2$, we go with the flow near $p_2$ and we repeat the argument. In that way, we obtain a Poincar\'{e}-Bendixson trap, whose base is $$\vert\sigma'_+\vert\cup\vert\sigma_2\vert\cup \vert\sigma_3\vert\cdots \vert\sigma_n\vert\cup \sigma_1((-\infty,t_0])\cup \varsigma,$$
where $\varsigma$ is the segment of ${\mathbb S}$ joining $q_1$ and $q'$.
The fact that $\sigma_1([t_0,\infty))$ is in $\omega(\gamma)$ excludes that $\sigma'((-\infty,0))$ is in $\omega(\gamma)$, in view of Lemma \ref{lema:uno}. This is the desired contradiction.

Assume now that ii) never holds. In this situation the above argument works just by reversing the ordering in ${\mathcal C}(\gamma)$.
\section{Accumulation along the skeleton}
In this section we finish the proof of Theorem \ref{th:msth} by considering the situations where we have skeleton edges of ${\mathcal C}(\gamma)$.

{\em Second case: There are skeleton edges and trace edges in ${\mathcal C}(\gamma)$.} In view of the dynamics described in Remark \ref{rk:verticesgrafo}, we have that either there is a trace edge $\sigma$ of ${\mathcal C}(\gamma)$ ending at a point $p$ and a skeleton edge $\sigma'$ starting at $p$, or a skeleton edge $\tau'$ of ${\mathcal C}(\gamma)$ ending at a point $p$ and a trace edge $\tau$ starting at $p$. Moreover, by using arguments as in the proof of Proposition \ref{prop:ciclico}, both properties are satisfied. Now, we have two possibilities
\begin{enumerate}
\item[A)] There is a trace edge $\sigma$ of ${\mathcal C}(\gamma)$ arriving to a saddle corner.
\item[B)] No trace edge of  ${\mathcal C}(\gamma)$  arrives to a saddle corner. In particular, the trace edges arrive to saddle-node angles, bi-saddle angles or plane saddles.
\end{enumerate}
Let us consider the case A). We consider a trace edge $\sigma$ arriving to a saddle corner $p$. Then $\sigma$ is contained in a component $D$ of $\partial M$. Consider a small quadrant of disc $\Delta\subset D$ around $p$ and let $q$ be the point of intersection of $\sigma$ with the frontier of $\Delta$. Take two points $r_1$ and $r_2$ on the frontier of $\Delta$ near $q$ each one at one side of $q$, and consider two small analytic curves $C_1$, $C_2$ transversal to $D$ at $q_1$ and $q_2$ respectively.  Now, we are going to consider the saturation  $S_1$ and $S_2$ of $C_1$ and $C_2$ respectively by the flow of ${\mathcal L}$, going step by step along the singular points of $\mathcal L$. Let $\sigma_1$ be the edge corresponding to the invariant variety of dimension one of $p$. We know that $\sigma_1$ is an edge of ${\mathcal C}(\gamma)$ and if we cut $S_1,S_2$ by a small transversal to $\sigma_1$, we obtain two curves $C^1_1,C^1_2$ of weight $\rho_1$, where $\rho_1$ is the weight associated to the point $p$; in between, the trajectory $\gamma$ passes infinitely many times. Now, let us go to the end point $p_1$ of $\sigma_1$. Assume first that $p_1$ is a saddle corner. The transition of weights described in Proposition \ref{prop:invtransweight} give to us a new edge $\sigma_2$ where the saturations $S_1$ and $S_2$ are adherent. This is because there are no infinitesimal saddle connections. We obtain two new curves $C_1^2,C_2^2$ of weight $\rho_2$ on a transversal plane to $\sigma_2$. Moreover, $\sigma_2$ is an edge of ${\mathcal C}(\gamma)$, since $\gamma$ passes infinitely many times between $C_1^2$ and $C_2^2$. Let $p_2$ be the end point of $\sigma_2$. If $p_2$ is a saddle corner, we repeat the argument. Now, we have two possibilities: either we encounter only saddle corners in this way or not. Let us go to the second case. After finitely many steps, the end point $p_k$ of $\sigma_k$ is not a saddle corner. It must be of angle type. Anyway, the saturations $S_1$ and $S_2$ go over a trace edge $\sigma_{k+1}$ contained in a component $D'$ of $\partial M$ and such that $\sigma_{k+1}$ is the unstable  variety of the bi-dimensional saddle ${\mathcal L}\vert_{D'}$. We get two curves $C^{k+1}_1,C^{k+1}_2$ over a small transversal to $\sigma_{k+1}$ such that $\gamma$ passes in between infinitely many times. Now, we go to the next point $p_{k+1}$. Since there are no bi-dimensional saddle connections, the trace edge $\sigma_{k+1}$ is in the two dimensional stable variety of $p_{k+1}$. If $p_{k+1}$ is a saddle corner, we re-start the argument as at the initial point $p$. If $p_{k+1}$ is a saddle-node angle, the edge $\sigma_{k+1}$ in in the side {\em node} and the exiting edge $\sigma_{k+2}$ in the side {\em saddle}. The situation repeats.

Hence, the above construction give to us a step by step construction of $S_1,S_2$ that coincides along $\sigma_1,\sigma_2,\ldots$ and such that $\gamma$ passes in between infinitely many times in a transversal section to each of the $\sigma_k$, $k=1,2,\ldots$. Since we have only finitely many singular points, there is a first repetition and we get a cycle. This allows us to construct two Poincar\'{e}-Bendixson traps, one from $S_1$ and the other one from $S_2$, with base $B$ the support of this cycle. Moreover, since $\gamma$ passes in between $S_1$ and $S_2$, the accumulation set $\omega(\gamma)$ is outside the two components of the complement of $B$, hence it coincides with $B$.

Let us consider now the case B). There is a trace edge $\sigma$ arriving or exiting a saddle-node angle in the side {\em node}, otherwise, given a trace edge $\sigma$, the starting point and the end point of $\sigma$ are both bi-dimensional saddles, but we have no connections of bi-dimensional saddles along $\partial M\setminus \partial^2 M$. Assume thus that $\sigma$ arrives to a saddle-node angle $p$ in the side {\em node}. The argument for the situation that $\sigma$ is exiting a saddle-node angle $p$ in the side {\em node} works in the same way. There is a unique $\sigma_1$ exiting $p$ that must coincide with the invariant variety of dimension one of $p$, then $\sigma_1$ is an edge of ${\mathcal C}(\gamma)$. The end point of $\sigma_1$ must be a saddle-node angle $p_2$ in the side ``node''. We continue the argument at $p_2$. In this way, up to reordering the indices, we obtain a cycle of edges and points
$$
p_0,\sigma_1,p_1,\sigma_2,p_2,\ldots,p_{n-1},\sigma_n,p_n=p_0,
$$
where $\sigma_k$ is a trace edge of ${\mathcal C}(\gamma)$, with initial point $p_{k-1}$ and end point $p_k$ and each $p_k$ is a saddle-node angle where $\sigma_k$ is on the side ``node'' and $\sigma_{k+1}$ is on the side ``saddle''. Now, we can prove that this cycle is necessarily the whole graph ${\mathcal C}(\gamma)$ with the same arguments as in the first case where there are no skeleton edges. Note that ``a posteriori" we have no skeleton edges and thus this possibility B) never holds.

{\em Third case: all edges in ${\mathcal C}(\gamma)$ are skeleton edges.} Hence all the vertices of ${\mathcal C}(\gamma)$ are saddle corners. First we will construct a special simple cycle ${\mathcal C}$ in ${\mathcal C}(\gamma)$ and second we will prove that $\mathcal C$ coincides with all the graph ${\mathcal C}(\gamma)$. The cycle ${\mathcal C}$ should be of the form
$$
{\mathcal C}:
 p_0,\sigma_1,p_1,\sigma_2,p_2,\ldots,p_{n-1},\sigma_n,p_n=p_0
$$
where $p_{k}$ is the initial point of the edge $\sigma_{k+1}$ and $p_{k+1}$ its end point, for each $k$ with $0\leq k\leq n-1$. Moreover $p_i\ne p_j$ if $i\ne j$ and $0\leq i,j\leq n-1$. In this case, the support $\vert {\mathcal C}\vert $ is homeomorphic to ${\mathbb S}^1$ and it is naturally oriented by ${\mathcal L}$. Once we have fixed an orientation of $\partial M$ we can speak about the connected component $U$ of $\partial M\setminus \vert {\mathcal C}\vert $ corresponding to the {\em right hand side} of the oriented support $\vert {\mathcal C}\vert $. Then, we require the following minimality property to $\mathcal C$:
\begin{quote} {\bf M}:{\em \/There is no edge of ${\mathcal C}(\gamma)$ contained in $U$.}
\end{quote}
Let us show how to construct ${\mathcal C}$ with the required property {\bf M}. Take an edge $\sigma_1$ ending at a point $p_1$. Recall that $p_1$ is a corner point, so we have two possibilities:\begin{enumerate}
\item The edge $\sigma_1$ is contained in the invariant variety of dimension two of $p_1$. In this case there is exactly one edge $\sigma_2$ exiting the point $p_1$.
\item The edge $\sigma_1$ coincides with the invariant variety of dimension one of $p_1$. Now, we have also two possibilities
\begin{enumerate} \item There is only one edge $\sigma_2$ exiting $p_1$.
\item There are exactly two edges exiting $p_1$. In this case we chose one of them, that we call $\sigma_2$, by using the {\em right hand side criterion}, that is between the two possibilities, we chose the one ``on the right'' of $\sigma_1$ for a once for all fixed orientation in ${\partial M}$, recall that ${\partial M}$ is homeomorphic to ${\mathbb S}^2$.
\end{enumerate}
\end{enumerate}
\begin{center}
\strut\vspace{10pt}\par
\includegraphics[scale=0.4]{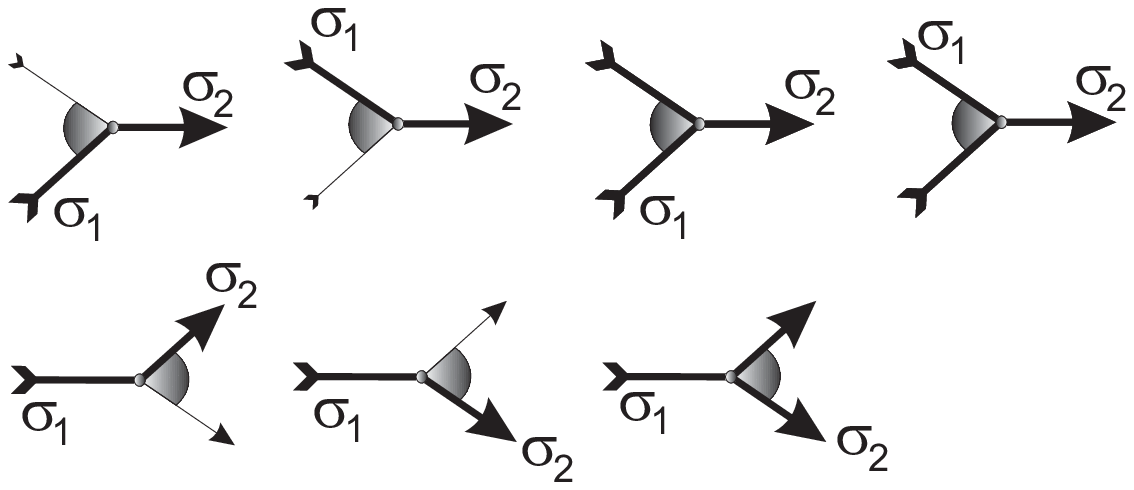}\\
{\small The seven possibilities for $\sigma_1$, $\sigma_2$}
\par\strut\vspace{10pt}\par
\end{center}
Now $\sigma_2$ has ending point at $p_2$. We continue in this way and we stop the first time we repeat a vertex. Up to reordering the indices, we obtain a simple cycle
$$
 {\mathcal C}_1: p_0,\sigma_1,p_1,\sigma_2,p_2,\ldots,p_{n-1},\sigma_n,p_n=p_0
$$
Let $U_1$ be the right hand side component of $\partial M\setminus \vert{\mathcal C}_1\vert$. If $U_1\cap \vert{\mathcal C}(\gamma)\vert=\emptyset$, we are done. Otherwise, by connectedness of $\omega(\gamma)$, there is a point $p_k$ with  $0\leq k\leq n-1$ such that there is an edge $\sigma'$ exiting or arriving to $p_k$ and contained in $U_1$. It is not possible that $\sigma'$ has $p_k$ as initial point, since by the {\em right hand side} criterion, we should have that $\sigma'=\sigma_{k+1}$ and thus $\sigma'$ is contained in $\vert {\mathcal C}\vert$. So, the only possibility is that $p_k$ is the final point of $\sigma'$. Call $q_{-1}$ the initial point of $\sigma'$. Now, $q_{-1}$ cannot coincide with one of the points $p_i$, because of our criterion of the right hand side, since otherwise the criterion is not respected at $p_i$. Take $\sigma'_{-1}$ arriving to $q_{-1}$ and call $q_{-2}$ the initial point of $\sigma'_{-1}$, by the same argument as before the point $q_{-2}$ does not coincide with one of the points $p_i$. We continue in this way and up to a moment, we produce a simple cycle  contained in $U_1$
$$
{\mathcal D}_1: q_{-k},\sigma'_{-k+1},q_{-k+1},\sigma'_{-k+2},q_{-k+2},\ldots, q_{-k'-1},\sigma'_{-k'},q_{-k'}=q_{-k}.
$$
Note that the right hand side connected component $V_1$ of $\partial M\setminus \vert{\mathcal D}_1\vert$ is strictly contained in $U_1$, in fact, we have that $q_{-k}\in U_1\setminus V_1$. Now we re-start with $\sigma'_{-k+1}$ to produce a cycle ${\mathcal C}_2$ that respects the right hand side criterion. Let $U_2$ be the right hand side connected component of $\partial M\setminus \vert{\mathcal C}_2\vert$. We have that
$$
U_1 \varsupsetneq V_1\supset U_2.
$$
By continuing in this way we get a cycle ${\mathcal C}={\mathcal C}_r$ satisfying the property ({\bf M}).

\begin{center}
\strut\vspace{10pt}\par
\includegraphics[scale=0.4]{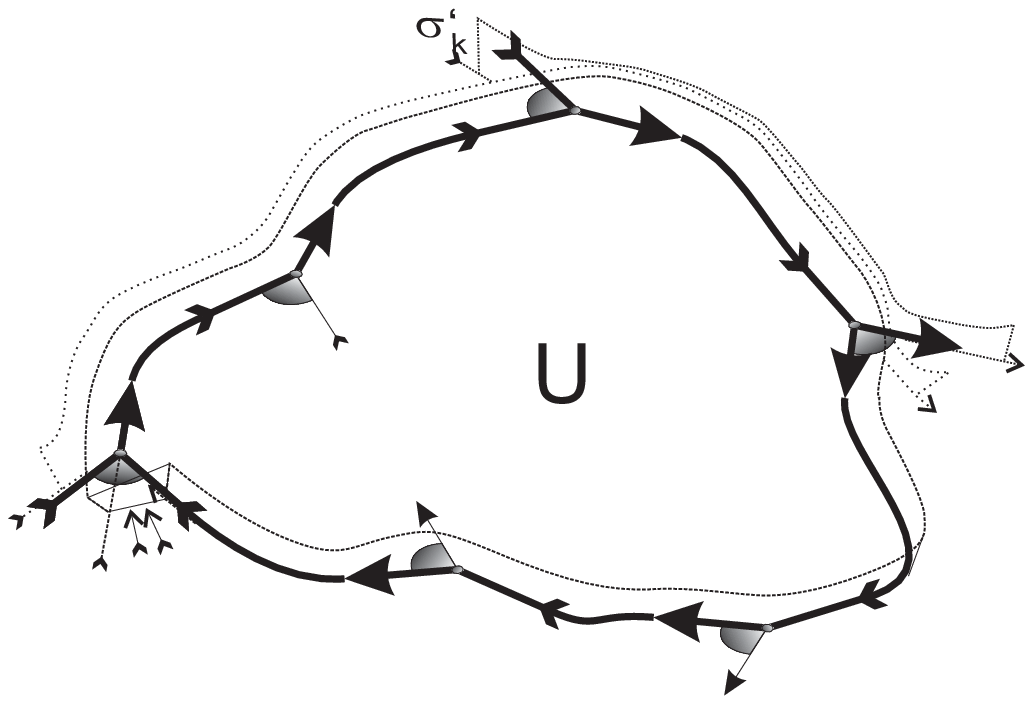}
\par\strut\vspace{10pt}\par
\end{center}

Now, we are going to see that ${\mathcal C}$ is the whole graph ${\mathcal C}(\gamma)$. Denote by $\widetilde U$, $U$ the two connected components of $\partial M\setminus \vert{\mathcal C}\vert$; note that $U$ does not contain any edge of ${\mathcal C}(\gamma)$.  Assume that ${\mathcal C}$ is not the whole graph ${\mathcal C}(\gamma)$. By connectedness, we have two possibilities
\begin{itemize}
\item There is an index $k$ and an edge $\sigma'_k$ in ${\mathcal C}(\gamma)$ having $p_k$ as end point with $\sigma'_k\ne\sigma_k$. Necessarily $\sigma'_k$ is contained in $\widetilde U$.
    \item There is an index $k$ and an edge $\sigma'_k$ in ${\mathcal C}(\gamma)$ having $p_{k-1}$ as starting point with $\sigma'_k\ne\sigma_k$. (This corresponds to the same situation as before, but reversing the arrows). We also have that  $\sigma'_k$ is contained in $\widetilde U$.
\end{itemize}
Assume that we are in the first case, the second case can be handled in a similar way.
Then,  $\sigma_k$ is inside the invariant variety of dimension two of $p_k$. If we take a non skeleton point $q_k$ in the invariant variety of dimension two of $p_k$, then $q_k$ is on the left hand side of $\sigma_k$ and on the right hand side of $\sigma'_k$. Consider the positive saturation $S_k$ of a curve $\Gamma_k$ transversal to $\partial M$ at the point $q_k$. We see that $S_k$ contains $\sigma_{k+1}$ in the adherence and $\gamma$ goes ``on the right'' of $S_k$. Once we arrive to $p_{k+1}$, we have two possible situations: either $S_k$ follows $\sigma_{k+2}$ or $S_k$ follows another edge $\sigma'_{k+2}$ on the left hand side of of $\sigma_{k+2}$. In the first case, we continue our argument going to $p_{k+2}$. Note anyway that $S_k$ always follows the weight of $p_k$ and because there are no infinitesimal saddle connections, the saturation $S_k$ accumulates only at the skeleton, following the {\em path of $p_k$} (for more details see \cite{Alo-C-C1, Alo-C-C2}). If we close the cycle in this way, we see that $\gamma$ is always on the right hand side of $S_k$, this avoid the existence of other edges in ${\mathcal C}(\gamma)$. Assume that we encounter a point $p_{k'}$ with $k'>k+1$ such that $S_k$ follows another edge $\sigma'_{k'}$ on the left hand side of $\sigma_{k'}$. Now we take a small transversal curve $\Gamma_{k'}$ in a non skeleton point $q_{k'}$ of the invariant variety of dimension two of $p_{k'}$ and we do the inverse saturation $S'_{k'}$ of $\Gamma_{k'}$. Then $S'_{k'}$ contains $\sigma_{k'-1}$ in the adherence and follows the weight of $p_{k'}$, going downstairs, we encounter a possible bifurcation between the path of $p_{k'}$ and the cycle $\mathcal C$ at a point $p_{k''}$ with $k''<k$. We re-start in a direct way from $p_{k''}$. At one of these steps, we close the cycle. In this way we obtain a Poincar\'{e}-Bendixson trap such that $\omega(\gamma)$ is contained in the union of the ``right hand side'' connected component $U$ and the cycle, but there is no edges of ${\mathcal C}(\gamma)$ inside $U$. Thus ${\mathcal C}(\gamma)$ coincides with $\mathcal C$.
\begin{remark} If is reasonable to expect the same kind of results if we substitute the condition that there is no infinitesimal saddle connections  by the the fact that all the singularities are linearizable. In this case, the saturation of a curve should transversely live in an o-minimal structure and there is no oscillation under a return, in the spirit of Khovanskii theory \cite{Kho} (see also \cite{Rol-S-S}).
\end{remark}


\begin{thebibliography}{AAA}
\bibitem{Alo-C-C1}{Alonso-Gonz\'{a}lez, C; Camacho, M.I; Cano, F.} {\em Topological Classification of Multiple Saddle Connections}. {Discrete and Contin. Dyn. Syst. \textbf{15}, 395-414, (2006).}
    \bibitem{Alo-C-C2} {Alonso-Gonz\'{a}lez, C; Camacho, M.I; Cano, F.} {\em Topological Invariants for Singularities of Real Vector Fields in Dimension Three}. {Discrete and Contin. Dyn. Syst. \textbf{20}, 275-291, (2008).}
\bibitem{Bel}{Belotto, A.} {Analytic varieties as limit periodic setc}. { ArXiv:1109.0877v1[math.DS]}
\bibitem{Cam} {Camacho, M.I.} {\em A contribution to the topological classification of homogeneus vector fields}. {J. of Differential Equations \textbf{57}, 159-171, (1983).}
\bibitem{Cam-C-S} {Camacho, C; Cano, F; Sad, P.} {\em Desingularization of absolutely isolated singularities of vector fields}. {Inventiones Mathematicae \textbf{98}, 351-369 (1989).}
\bibitem{Can-M-S} {Cano,F.; Moussu, R.; Sanz, F.} {\em Oscillation, spiralement, tourbillonnement}. {Comment. Math. Helv. \textbf{75}, 284–318, (2000).}
\bibitem{Kho} {Khovanskii, A.} {\em Fewnomials}. {Translations of Mathematical Monographs, AMS, 1991}
\bibitem{Kur-M-P} {Kurdyka, K.; Mostowski, T.; Parusinsky, A.} {\em Proof of the gradient conjecture of R. Thom}. {Ann. of Math. (2) \textbf{152}, 763-792, (2000).}
\bibitem{M-S} {Palis,J.; de Melo, W} {\em Geometric Theory of Dynamical Systems}. {Springer, (1982).}
\bibitem{Pan-Rou}{ Panazzolo, D.; Roussarie, R.} {\em A Poincar\'{e}-Bendixson theorem for analytic families of vector fields}. {Bol. Soc. Brasil. Mat. (N.S), \textbf{26}, 85–116, (1995).}
\bibitem{Rol-S-S}{J.-P. Rolin; F. Sanz; R. Sch\"{a}fke}  {\em Quasi-analytic solutions of analytic ordinary differential equations and o-minimal structures}. {Proc. London Math. Soc., 95(2):413-442, 2007.}
\end{thebibliography}
\end{document}